\DeclareMathOperator{\diag}{diag}
\title{Hankel Matrices for the Period-Doubling Sequence}
\author{Robbert J. Fokkink and Cor Kraaikamp \\
Applied Mathematics \\
TU Delft\\
Mekelweg 4\\
2628 CD Delft\\
Netherlands \\
\href{mailto:r.j.fokkink@ewi.tudelft.nl}{\tt r.j.fokkink@ewi.tudelft.nl} \\
\href{mailto:C.Kraaikamp@tudelft.nl}{\tt C.Kraaikamp@tudelft.nl}
\and
Jeffrey Shallit \\
School of Computer Science\\
University of Waterloo\\
Waterloo, ON  N2L 3G1 \\
Canada \\
\href{mailto:shallit@cs.uwaterloo.ca}{\tt shallit@cs.uwaterloo.ca}
}
\begin{document}

\theoremstyle{plain}
\newtheorem{theorem}{Theorem}
\newtheorem{corollary}[theorem]{Corollary}
\newtheorem{lemma}[theorem]{Lemma}
\newtheorem{proposition}[theorem]{Proposition}

\theoremstyle{definition}
\newtheorem{definition}[theorem]{Definition}
\newtheorem{example}[theorem]{Example}
\newtheorem{conjecture}[theorem]{Conjecture}

\theoremstyle{remark}
\newtheorem{remark}[theorem]{Remark}

\maketitle

\begin{abstract}
We give an explicit evaluation, in terms of products of Jacobsthal
numbers, of the Hankel determinants of order
a power of two for the period-doubling sequence.  We also explicitly
give the eigenvalues and eigenvectors of the corresponding Hankel
matrices.     Similar considerations give the Hankel determinants
for other orders.
\end{abstract}

\section{Introduction}

Let ${\bf s} = (s_n)_{n \geq 0}$ be a sequence of real numbers.
The {\it Hankel matrix} $M_{\bf s} (k)$ of order $k$ associated with
$\bf s$ is defined as follows:
\begin{equation}
M_{\bf s} (k) = \left[
	\begin{array}{cccc}
	s_0 & s_1 & \cdots & s_{k-1} \\
	s_1 & s_2 & \cdots & s_k \\
	\vdots & \vdots & \ddots & \vdots \\
	s_{k-1} & s_k & \cdots & s_{2k-2}
	\end{array} \right] .
	\label{hank}
\end{equation}
See, for example, \cite{Iohvidov:1982}.
Note that the rows of $M_{\bf s} (k)$ are made up of successive length-$k$
``windows'' into the sequence $\bf s$.

Of particular interest are the determinants
$\Delta_{\bf s} (k) = \det M_{\bf s} (k)$ of the Hankel matrices
in \eqref{hank}, which are often quite challenging to compute explicitly.
In some cases when these determinants are non-zero,
they permit estimation of the
irrationality measure of the associated real numbers
$\sum_{n \geq 0} s_n b^{-n}$, where $b \geq 2$ is an integer;
see, for example,
\cite{Bugeaud:2011,Coons&Vrbik:2012,Coons:2013,Coons:2015,Vaananen:2015,Niu&Li:2015,Bugeaud&Han&Wen&Yao:2015}.
In some sense, the Hankel determinants
measure how ``far away'' the sequence $\bf s$ is from a linear recurrence
with constant coefficients, since for such a sequence we have
$H_{\bf s}(n) = 0$
for all sufficiently large $n$.

In this note we consider the Hankel determinants for a
certain infinite sequence of interest, the so-called
{\it period-doubling sequence} 
$${\bf d} = (d_i)_{i \geq 0} = 1011101010111011101110101011101 \cdots .$$
This sequence can be defined in various ways \cite{Damanik:2000},
but probably the three
simplest are as follows:
\begin{itemize}
\item as the fixed point of the map
$$ 1 \rightarrow 10, \quad\quad\quad 0 \rightarrow 11;$$
\item as the first difference, taken modulo $2$, of the Thue-Morse
sequence ${\bf t} = 0110100110010110 \cdots$ (fixed point of the map
$0 \rightarrow 01$, $1 \rightarrow 10$);
\item as the sequence defined by
$$ d_i = \begin{cases}
	1, & \text{if $s_2 (i) \not\equiv s_2 (i+1)$ (mod $2$)}; \\
	0, & \text{otherwise};
	\end{cases}
$$
where $s_2 (i)$ is the sum of the binary digits of $i$ when expressed
in base $2$.
\end{itemize}
We explicitly compute the Hankel determinants when the orders
are a power of $2$, and we also compute the eigenvalues and eigenvectors of the
corresponding Hankel matrices.
We derive recursions for Hankel determinants for all orders.
Finally,
we also consider the determinants
for the complementary sequence 
$$\overline{\bf d} = 0100010101000100010001010100010 \cdots ,$$
obtained from $\bf d$ by changing $1$ to $0$ and vice versa.

\subsection{Previous work}

By considering $\Delta_{\bf d} (n)$ modulo $2$,
Allouche, Peyri\`ere, Wen, and
Wen \cite[Prop.~2.2]{Allouche&Peyriere&Wen&Wen:1998}
proved that $\Delta_{\bf d} (n)$ is odd for all $n \geq 1$.    However,
they did not obtain any explicit formula for $\Delta_{\bf d} (n)$.
In fact, their main focus was on the non-vanishing of the Hankel
determinants for the Thue-Morse sequence on values $\pm 1$.  For this,
also see Bugeaud and Han \cite{Bugeaud&Han:2014} and
Han \cite{Han:2015}.  Recently
Fu and Han \cite{Fu&Han:2015} also studied some Hankel matrices
associated with the period-doubling sequence, but they did not obtain
our result.

There are only a small number of sequences defined by iterated morphisms
for which the Hankel determinants are explicitly known (even for
subsequences).  These include the infinite Fibonacci word
\cite{Kamae&Tamura&Wen:1999}, the
paperfolding sequence \cite{Guo&Wen&Wu:2014,Fu&Han:2015},
the iterated differences of the Thue-Morse sequence
\cite{Guo&Wen:2014}, the Cantor sequence
\cite{Wen&Wu:2014}, and sequences related to the
Thue-Morse sequence \cite{Han&Wu:2015,Fu&Han:2016a,Fu&Han:2016b}.

\section{Hankel determinants}

Here are the first few terms of the Hankel determinants for the
period-doubling sequence and its complementary sequence:

\begin{table}[H]
\begin{center}
\begin{tabular}{c|rrrrrrrrrrrrrrrr}
$k$ & 1 & 2 & 3 & 4 & 5 & 6 & 7 & 8 & 9 & 10 & 11 & 12 & 13 & 14 & 15 & 16 \\
\hline
$\Delta_{\bf d}(k)$ & 1 & 1 & $-1$&$-3$ & 1 & 1 & $-1$&$-15$& 1 &  1 & $-1$ & $-3$ &  1 &  1 & $-9$ & $-495$\\
\hline
$\Delta_{\overline{\bf d}}(k)$ & 0 & $-1$ &$0$ & 1 & 0 & $-1$ & 0 &$9$&$0$&$-1$& 0 &  1 & $0$ & $-1$ &  0 & $225$
\end{tabular}
\end{center}
\end{table}

The large values at the powers of $2$ suggest something interesting
is going on.  Indeed, by explicit calculation we find
\begin{align*}
\Delta_{\bf d}(32) &= -467775 & \quad
	\Delta_{\overline{\bf d}} (32) &= 245025 \\
\Delta_{\bf d} (64) &= -448046589375 & \quad
	\Delta_{\overline{\bf d}} (64) &= 218813450625 \\
\Delta_{\bf d} (128) &= -396822986774382287109375 & \quad
	\Delta_{\overline{\bf d}} (128) &= 200745746250569862890625,
\end{align*}
and so forth. Another obvious pattern is $\Delta_{\overline{\bf d}}(n)=0$ for odd $n$.

Define $J_n = (2^n - (-1)^n)/3$, the so-called
{\it Jacobsthal numbers} \cite{Horadam:1996}.
It is easy to see that
\begin{align}
J_{n+1} &= J_n + 2J_{n-1} \label{jac1} \\
J_{n+1} &= 2J_n + (-1)^n \label{jac2} \\
J_{n+1} &= 2^n - J_{n} \label{jac3}
\end{align}
for $n \geq 0$. We will prove that $\Delta_{\bf d}(n)$ and $\Delta_{\overline{\bf d}}(n)$ are products
of $n$ Jacobsthal numbers, and that their factorizations are almost the same. The
reason why $\Delta_{\overline{\bf d}}(n)=0$ for odd $n$ is that it is a
product involving $J_0$.

In this paper we will prove

\begin{theorem}
For integers $k \geq 2$ we have
$\Delta_{\bf d} (2^k) = - J_{k+1} \prod_{3 \leq i \leq k} J_i^{2^{k-i}},$
and
$\Delta_{\overline{\bf d}} (2^k) = J_k \prod_{3 \leq i \leq k} 
J_i^{2^{k-i}} $, where, as usual, the empty product evaluates to $1$.
\label{main}
\end{theorem}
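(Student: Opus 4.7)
The overall strategy is to exploit the substitution $1 \to 10,\ 0 \to 11$ to halve the order of the Hankel matrices. This substitution gives immediately $d_{2n} = 1$ and $d_{2n+1} = 1 - d_n$. Let $P$ be the $2m \times 2m$ permutation matrix that reorders indices so that even-indexed rows and columns come before odd-indexed ones. Since $J_{2m}$ is invariant under any conjugation by a permutation, direct bookkeeping then yields
\[ P M_{\bf d}(2m) P^T = \begin{pmatrix} J_m & J_m - M_{\bf d}(m) \\ J_m - M_{\bf d}(m) & J_m \end{pmatrix}, \quad P M_{\overline{\bf d}}(2m) P^T = \begin{pmatrix} 0 & M_{\bf d}(m) \\ M_{\bf d}(m) & 0 \end{pmatrix}, \]
where $J_m$ denotes the $m \times m$ all-ones matrix. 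Applying the standard identity $\det\begin{pmatrix} A & B \\ B & A \end{pmatrix} = \det(A+B)\det(A-B)$ for symmetric block matrices then produces the two basic recursions
\[ \Delta_{\bf d}(2m) = \Delta_{\bf d}(m) \cdot \det(2J_m - M_{\bf d}(m)), \qquad \Delta_{\overline{\bf d}}(2m) = (-1)^m \Delta_{\bf d}(m)^2. \]

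The second recursion disposes of $\overline{\bf d}$: since $(-1)^{2^{k-1}} = 1$ for $k \geq 2$, we have $\Delta_{\overline{\bf d}}(2^k) = \Delta_{\bf d}(2^{k-1})^2$, which reduces the claim for $\overline{\bf d}$ to the claim for $\bf d$. The factor $\det(2J_m - M_{\bf d}(m))$ in the first recursion is not itself a Hankel determinant, so I would introduce the one-parameter family $e_\alpha(m) := \det(\alpha J_m - M_{\bf d}(m))$. Carrying out the same block reduction on $\alpha J_{2m} - M_{\bf d}(2m)$ (tracking the factors of $-1$ carefully) yields the clean recursion
\[ e_\alpha(2m) = \Delta_{\bf d}(m) \cdot e_{2 - 2\alpha}(m). \]

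Iterating this $k$ times starting from $e_0(2^k) = \Delta_{\bf d}(2^k)$, and using $e_\alpha(1) = \alpha - 1$, we get $\Delta_{\bf d}(2^k) = (\alpha_k - 1) \prod_{i=0}^{k-1} \Delta_{\bf d}(2^i)$ where $\alpha_0 = 0$ and $\alpha_{j+1} = 2 - 2\alpha_j$. Solving this affine iteration yields $\alpha_k = 2(-1)^{k+1} J_k$, and the Jacobsthal identity $J_{k+1} = 2J_k + (-1)^k$ from (\ref{jac2}) collapses $\alpha_k - 1$ to $(-1)^{k+1} J_{k+1}$. Rewriting the telescoping relation as the two-step recursion $\Delta_{\bf d}(2^{k+1}) = -(J_{k+2}/J_{k+1}) \cdot \Delta_{\bf d}(2^k)^2$, an easy induction on $k$ then verifies the claimed closed-form product for $\Delta_{\bf d}(2^k)$, and squaring produces the formula for $\Delta_{\overline{\bf d}}(2^k)$.

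The main obstacle is the very first step: checking that the parity permutation really does produce blocks of exactly the shape claimed, so that the algebra stays inside the two-variable world of $J_m$ and $M_{\bf d}(m)$. Once that bookkeeping is in hand, the symmetric block determinant identity, the linear iteration $\alpha \mapsto 2 - 2\alpha$, and the standard Jacobsthal recurrences do all the remaining work.
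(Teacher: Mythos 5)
Your plan is correct --- I verified the parity decomposition (from $d_{2n}=1$, $d_{2n+1}=1-d_n$), the block identity $\det\left[\begin{smallmatrix}A&B\\B&A\end{smallmatrix}\right]=\det(A+B)\det(A-B)$, the cancellation of the two $(-1)^m$ factors in $e_\alpha(2m)=\Delta_{\bf d}(m)\,e_{2-2\alpha}(m)$, the solution $\alpha_k=2(-1)^{k+1}J_k$ of the affine iteration, and the closing induction, and everything checks against the tabulated values --- but it is a genuinely different route from the paper's. The paper exploits the same dyadic self-similarity, but through the unpermuted quadrant structure $M_{\bf d}(2^{k+1})=\left[\begin{smallmatrix}P&Q\\Q&P\end{smallmatrix}\right]$ with $P=\varphi^k(1)$, $Q=\varphi^k(0)$, and runs a coupled induction showing that conjugation by the Sylvester--Hadamard matrix $\gamma^k(1)$ \emph{diagonalizes} both $P$ and $Q$ simultaneously, with explicit diagonal entries $\pm J_i$; the determinant is then just the product of eigenvalues. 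Your route replaces that coupled pair by the one-parameter family $e_\alpha(m)=\det(\alpha E_m-M_{\bf d}(m))$ and closes the recursion with the scalar iteration $\alpha\mapsto 2-2\alpha$, which is more elementary (no diagonalizing matrix has to be guessed) and yields the clean byproduct $\Delta_{\overline{\bf d}}(2m)=(-1)^m\Delta_{\bf d}(m)^2$ for \emph{all} $m$, not just powers of $2$. What you give up is the spectral information: the paper's method also produces the eigenvalues and eigenvectors of $M_{\bf d}(2^k)$ and $M_{\overline{\bf d}}(2^k)$, which are stated as part of its results and drive the subsequent corollaries, whereas your argument determines only the determinant. (Minor quibbles: the justification ``$J_{2m}$ is invariant under conjugation by a permutation'' is beside the point --- what you need is $\det(PXP^T)=\det X$ --- and reusing $J_m$ for the all-ones matrix alongside the Jacobsthal numbers $J_i$ invites confusion; neither affects correctness.)
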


In the proof of Theorem~\ref{main}, we also obtain a complete description of the eigenvalues of
$M_{\bf d}(2^k)$ and $M_{\overline{\bf d}}(2^k)$, as well as a basis for the corresponding eigenspaces.

Our second main result handles the Hankel determinants of all orders.

\begin{theorem}
For all integers $n \geq 1$, the Hankel determinants
$\Delta_{\bf d} (n)$ and $\Delta_{\overline{\bf d}}(n)$ are products of $n$ Jacobsthal numbers,
counted with multiplicity, and including the trivial divisors $J_0, J_1, J_2$ in the count. If $n$ is even,
then $J_{i}\Delta_{\bf d}(n)=-J_{i+1}\Delta_{\overline{\bf d}}(n)$ for some $i>0$.
\label{main2}
\end{theorem}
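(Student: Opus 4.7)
My approach is to exploit the self-similarity $d_{2i}=1$, $d_{2i+1}=\overline{d}_i$ (coming from the defining morphism $1\mapsto 10,\ 0\mapsto 11$) to reduce the Hankel determinants at order $n$ to those at order roughly $n/2$, and then combine these recursions with the explicit formulas of Theorem~\ref{main} at powers of $2$, inducting on $n$.

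Applying the ``even-indices-first'' permutation simultaneously to the rows and columns of $M_{\bf d}(2m)$ (the common permutation preserves the determinant), I obtain the block forms
\[ M_{\bf d}(2m)\ \sim\ \begin{pmatrix} \mathbf{1}_m & M_{\overline{\bf d}}(m) \\ M_{\overline{\bf d}}(m) & \mathbf{1}_m \end{pmatrix},\qquad M_{\overline{\bf d}}(2m)\ \sim\ \begin{pmatrix} \mathbf{0}_m & M_{\bf d}(m) \\ M_{\bf d}(m) & \mathbf{0}_m \end{pmatrix}, \]
where $\mathbf{1}_m,\mathbf{0}_m$ are the $m\times m$ all-ones and all-zeros matrices. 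The second immediately yields $\Delta_{\overline{\bf d}}(2m)=(-1)^m\Delta_{\bf d}(m)^2$ by a block-column swap. For the first, block row/column operations together with the matrix-determinant lemma applied to the rank-one matrix $\mathbf{1}_m=ee^T$ (using the invertibility of $M_{\bf d}(m)$ guaranteed by the Allouche--Peyri\`ere--Wen--Wen parity result) give the clean recursion
\[ \Delta_{\bf d}(2m)=\Delta_{\bf d}(m)\bigl[\,2\Delta_{\overline{\bf d}}(m)-(-1)^m\Delta_{\bf d}(m)\,\bigr]. \]
This pair propagates the Jacobsthal structure: if $m$ is even and satisfies the inductive ratio identity $J_i\Delta_{\bf d}(m)=-J_{i+1}\Delta_{\overline{\bf d}}(m)$, the identity $2J_i+J_{i+1}=J_{i+2}$ collapses the bracket and shifts the index to $i+1$ at level $2m$; if $m$ is odd, $\Delta_{\overline{\bf d}}(m)=0$ forces $\Delta_{\bf d}(2m)=\Delta_{\bf d}(m)^2$ and $\Delta_{\overline{\bf d}}(2m)=-\Delta_{\bf d}(m)^2$, so the ratio identity holds trivially at $i=1$ via $J_1=J_2=1$. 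Tracking the cancellation of a single $J_{i+1}$ factor confirms that the Jacobsthal count remains exactly $2m$.

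The odd case $n=2m+1$ is the main obstacle. The same permutation now produces an unbalanced block form with $\mathbf{1}_{m+1}$ and $\mathbf{1}_m$ on the diagonal and rectangular $(m{+}1)\times m$ off-diagonal blocks, and the Schur-complement reduction no longer closes cleanly because the all-ones blocks are singular. I plan to address this by combining the block reduction with the Desnanot--Jacobi (Dodgson) condensation identity, which relates $\Delta_{\bf d}(2m+1)$ to $\Delta_{\bf d}(2m)$ together with the one- and two-shifted Hankel determinants $\det(d_{i+j+1})_{i,j<k}$ and $\det(d_{i+j+2})_{i,j<k}$. The shifted sequences $(d_{i+1})_i$ and $(d_{i+2})_i$ inherit their own ``even-vs-odd'' self-similarity from the morphism, so a parallel induction (or direct reduction to the unshifted case) handles them. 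Throughout, the trivial factors $J_0=0$ and $J_1=J_2=1$ provide the slack needed to make the Jacobsthal count exactly $n$: $J_0$ absorbs the vanishing of $\Delta_{\overline{\bf d}}(n)$ at odd $n$, while the $J_1$'s and $J_2$'s pad the count and carry the signs.
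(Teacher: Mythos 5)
Your even-order analysis is correct and is a genuinely different (and slicker) route than the paper's: the decimation identities $d_{2i}=1$, $d_{2i+1}=\overline d_i$ do give, after the even-odd permutation, the block forms you state; the formula $\Delta_{\overline{\bf d}}(2m)=(-1)^m\Delta_{\bf d}(m)^2$ follows at once; and the identity $\Delta_{\bf d}(2m)=\Delta_{\bf d}(m)\bigl[2\Delta_{\overline{\bf d}}(m)-(-1)^m\Delta_{\bf d}(m)\bigr]$ checks out (write the block determinant as $\det(\mathbf 1_m-B)\det(\mathbf 1_m+B)$ with $B=M_{\overline{\bf d}}(m)$, note $\mathbf 1_m-B=M_{\bf d}(m)$, and use $\det(A+ee^T)=\det A+e^T\operatorname{adj}(A)e$, which avoids any invertibility hypothesis). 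The index-shift bookkeeping via $J_{i+2}=J_{i+1}+2J_i$ is also right, provided you strengthen the induction hypothesis so that the ratio identity holds at the level of the Jacobsthal factorizations (i.e., the multisets of factors of $\Delta_{\bf d}(m)$ and $\Delta_{\overline{\bf d}}(m)$ differ by one factor $J_{i+1}$ versus $J_i$), not merely as an equation between integers; otherwise ``cancelling a single $J_{i+1}$ factor'' is not justified.

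The genuine gap is the odd case, which you defer to a plan that I do not believe closes. Desnanot--Jacobi condensation for Hankel matrices necessarily drags in the shifted determinants $\Delta_{{\bf d},1}(n)=\det(d_{i+j+1})$ and $\Delta_{{\bf d},2}(n)=\det(d_{i+j+2})$, and these are \emph{not} products of Jacobsthal numbers and do not inherit a clean self-similarity: the paper's own table of shifted Hankel determinants shows values such as $\Delta_{{\bf d},1}(8)=11$, $\Delta_{{\bf d},1}(15)=141$, $\Delta_{{\bf d},2}(14)=40$, $\Delta_{{\bf d},2}(16)=-253$, and it contains long runs of zeros in the interior minors, exactly where the condensation recurrence $\Delta(k)\Delta''(k-2)=\Delta(k-1)\Delta''(k-1)-\Delta'(k-1)^2$ degenerates and fails to determine $\Delta(k)$. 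Since all of the nontrivial content of Theorem~\ref{main2} for $\Delta_{\bf d}$ beyond powers of $2$ sits in the odd orders (your even recursion just squares smaller values), this is not a detail but the heart of the proof. The paper avoids shifted determinants entirely: it introduces a two-parameter family $\Delta_{i,k}(j)$ of leading principal minors of $J_iP_k+J_{i-1}Q_k$ and proves two closed recursions on that family (one raising the auxiliary index $i$, one resetting it), which handle odd and even $j$ uniformly. To complete your argument you would either need to adopt something like that auxiliary family, or prove independent structural results for $\Delta_{{\bf d},1}$ and $\Delta_{{\bf d},2}$ --- a task the authors themselves flag as ``probably not that easy.''
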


\section{1-D and 2-D morphisms}

Let $\Sigma, \Delta$ denote finite alphabets.
A {\it morphism} (or {\it substitution}) is a map $h$ from
$\Sigma^* \rightarrow \Delta^*$ satisfying $h(xy) = h(x)h(y)$ for all
strings $x, y$.  If $\Sigma = \Delta$ we can iterate $h$, writing
$h^1(x)$ for $h(x)$, $h^2(x)$ for $h(h(x))$, and so forth.
In this paper we will need a variant of the
so-called Thue-Morse morphism \cite{Seebold:1985a}, defined as
follows:
$$ \rho(1) = (-1, 1) \quad \quad \quad \rho(-1) = (1, -1) .$$

We can also define the notion of morphisms for arrays (or matrices).
A {\it 2-D morphism} (or {\it 2-D substitution}) can be viewed
as a map from $\Sigma$ to $\Delta^{r \times s}$ that is extended
to matrices in the obvious way \cite{Salon:1986,Salon:1987,Salon:1989b,Shallit&Stolfi:1989}.

One of the most famous maps of this form is the map
$$ \gamma(1) = \left[ \begin{array}{cc}
	1 & 1 \\
	1 & -1
	\end{array}
	\right] \quad \quad
	\gamma(-1) = \left[ \begin{array}{cc}
	        -1 & -1 \\
		-1 & 1
		\end{array}
		\right],
$$
which, when iterated $k$ times, produces a {\it Hadamard matrix} of
order $2^k$.  (An $n \times n$ matrix $H$ is said to be Hadamard if
all entries are $\pm 1$ and furthermore $H H^T = n I$, where
$I$ is the identity matrix; see \cite{Seberry&Yamada:1992}.)

We now observe that the Hankel matrix $M_{\bf d} (2^k)$
of the period-doubling sequence can be generated in a similar
way, via the 2-D morphism

$$ \varphi(1) = \left[ \begin{array}{cc}
	1 & 0 \\
	0 & 1
	\end{array}
	\right] \quad \quad
	\varphi(0) = \left[ \begin{array}{cc}
	        1 & 1 \\
		1 & 1
		\end{array}
		\right],
$$
More precisely, $M_{\bf d} (2^k) = \varphi^k (1)$.

Similarly, $M_{\overline{\bf d}} (2^k) = \overline{\varphi}^k (0)$
for the complementary substitution $\overline\varphi$ which is defined
as follows:
$$ \overline\varphi(0) = \left[ \begin{array}{cc}
	0 & 1 \\
	1 & 0
	\end{array}
	\right] \quad \quad
	\overline\varphi(1) = \left[ \begin{array}{cc}
	        0 & 0 \\
		0 & 0
		\end{array}
		\right].
$$

Let $v = (a_1, a_2, \ldots, a_n)$ be a vector of length $n$.  By $\diag(v)$ we mean the diagonal
matrix
$$ \left[ \begin{array}{ccccc}
	a_1 & 0 & 0 & \cdots & 0 \\
	0 & a_2 & 0 & \cdots & 0 \\
	0 &  0  & a_3 & \cdots & 0 \\
	\vdots & \vdots & \vdots & \ddots & \vdots \\
	0 & 0 & 0 & \cdots & a_n
	\end{array} \right] .$$

We now observe that the Hankel matrices of the period-doubling sequence
are diagonalized by the Hadamard matrices $\gamma^k (1)$:

\begin{theorem}
For $k \geq 1$ we have
\begin{itemize}
\item[(a)]
$\gamma^k(1) \varphi^k (1) \gamma^k (1) = 2^k \diag(J_{k+1}, J_k, J_{k-1} \rho(1), J_{k-2} \rho^2 (1), \ldots, J_1 \rho^{k-1} (1) ) $
and
\item[(b)] $\gamma^k(1) \varphi^k (0) \gamma^k (1) = 2^{k+1} \diag(J_k, J_{k-1},
J_{k-2} \rho (1), \ldots, J_1 \rho^{k-2} (1), J_0\rho^{k-1} (1) ). $
\end{itemize}
\end{theorem}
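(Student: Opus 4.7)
\medskip

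\noindent\textbf{Proof proposal.}

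The plan is induction on $k$, with the base case $k=1$ handled by direct $2\times 2$ computation. For the inductive step the key is to exploit the block structures that the three 2-D morphisms induce. Writing $H_k=\gamma^k(1)$, $A_k=\varphi^k(1)$, $B_k=\varphi^k(0)$, one iteration of each morphism gives
\[
H_{k+1}=\begin{pmatrix}H_k & H_k\\ H_k & -H_k\end{pmatrix},\quad
A_{k+1}=\begin{pmatrix}A_k & B_k\\ B_k & A_k\end{pmatrix},\quad
B_{k+1}=\begin{pmatrix}A_k & A_k\\ A_k & A_k\end{pmatrix},
\]
where the last identity uses $\varphi^{k+1}(0)=\varphi^k(\varphi(0))$ together with the fact that every entry of $\varphi(0)$ is $1$. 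Multiplying out the $2\times 2$ block products, the off-diagonal blocks cancel and one obtains the clean formulas
\[
H_{k+1}A_{k+1}H_{k+1}=2\,\diag\bigl(H_k(A_k+B_k)H_k,\;H_k(A_k-B_k)H_k\bigr)
\]
and
\[
H_{k+1}B_{k+1}H_{k+1}=4\,\diag\bigl(H_kA_kH_k,\;0\bigr).
\]

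Next I would substitute the inductive hypotheses for $H_kA_kH_k$ and $H_kB_kH_k$ and add/subtract the diagonals entry by entry. For the ``$+$'' block, each coordinate becomes $2^k(J_{k+1-j}+2J_{k-j})$, which collapses to $2^k J_{k+2-j}$ via the Jacobsthal recurrence \eqref{jac1}; matching this against the predicted diagonal for part (a) at level $k+1$ is then routine. For the ``$-$'' block, each coordinate becomes $2^k(J_{k+1-j}-2J_{k-j})=2^k(-1)^{k-j}$ by \eqref{jac2}. Statement (b) at level $k+1$ follows from $B_{k+1}$'s formula with no further combinatorics, using only that $J_0=0$ to interpret the final zero block as $J_0\rho^k(1)$.

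The one genuine obstacle is recognizing that the ``$-$'' block, namely $2^{k+1}\diag\bigl((-1)^k,(-1)^{k-1},(-1)^{k-2}\rho(1),\ldots,\rho^{k-1}(1)\bigr)$, coincides with the tail $2^{k+1}\diag(J_1\rho^k(1))=2^{k+1}\diag(\rho^k(1))$ of the predicted eigenvalue list. I would isolate this as an auxiliary lemma:
\begin{equation*}
\rho^{k}(1)=\bigl((-1)^{k},(-1)^{k-1},(-1)^{k-2}\rho(1),(-1)^{k-3}\rho^{2}(1),\ldots,\rho^{k-1}(1)\bigr).
\end{equation*}
This is proved by a separate induction on $k$: $\rho$ acts on a signed $\pm 1$ entry $x$ by $\rho(x)=x\rho(1)$, so applying $\rho$ coordinate-wise to the decomposition at level $k$ turns the leading scalar $(-1)^k$ into the pair $((-1)^{k+1},(-1)^k)$, turns the next scalar $(-1)^{k-1}$ into $(-1)^{k-1}\rho(1)$, and promotes each block $(-1)^{k-1-j}\rho^{j}(1)$ to $(-1)^{k-1-j}\rho^{j+1}(1)$, producing exactly the decomposition at level $k+1$. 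Once this lemma is in hand, the two induction steps close and the theorem follows.
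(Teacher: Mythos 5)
Your proposal is correct and follows essentially the same route as the paper: induction on $k$ via the identical $2\times 2$ block decompositions of $\gamma^{k+1}(1)$, $\varphi^{k+1}(1)$, $\varphi^{k+1}(0)$, with the Jacobsthal recurrences \eqref{jac1} and \eqref{jac2} collapsing the sum and difference blocks. The only difference is that your auxiliary lemma decomposing $\rho^{k}(1)$ as $\bigl((-1)^{k},(-1)^{k-1},(-1)^{k-2}\rho(1),\ldots,\rho^{k-1}(1)\bigr)$ spells out, correctly, the step the paper compresses into the single assertion that $[P_k+2Q_k,\,P_k-2Q_k]=P_{k+1}$.
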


\begin{proof}
By induction on $k$.  The verification for $k = 1$ is left to the reader.

Now assume the results are true for $k$.  We prove them for $k+1$.

We start with (a).
Write $P_k$ for the vector
$[J_{k+1}, J_k, J_{k-1} \rho(1), J_{k-2} \rho^2 (1), \ldots, J_1 \rho^{k-1} (1) ]$
and $Q_k$ for the vector
$[J_k, J_{k-1}, J_{k-2} \rho(1), \ldots, J_1 \rho^{k-2} (1), J_0
\rho^{k-1} (1)] $.  Note that from the definition of
$P_k$ and $Q_k$, and the
fact that $J_0 = 0$, we have
\begin{equation}
Q_{k+1} =  [P_k, \overbrace{0,0,\ldots,0}^{2^k} \ ] . \label{qp}
\end{equation}

Now
\begin{align*}
\gamma^{k+1} (1) \varphi^{k+1} (1) \gamma^{k+1} (1) &=
	\left[ \begin{array}{cc}
		\gamma^k (1) & \gamma^k (1) \\
		\gamma^k (1) & -\gamma^k (1)
		\end{array}
		\right]
	\left[ \begin{array}{cc}
		\varphi^k (1) & \varphi^k (0) \\
		\varphi^k (0) & \varphi^k (1)
		\end{array}
		\right]
	\left[ \begin{array}{cc}
		\gamma^k (1) & \gamma^k (1) \\
		\gamma^k (1) & -\gamma^k (1)
		\end{array}
		\right]
\\[1.0ex]
&= \left[ \begin{array}{cc}
	\gamma^k (1)(\varphi^k(1)+\varphi^k(0)) & \gamma^k (1)(\varphi^k(1)+\varphi^k(0))  \\
	\gamma^k (1)(\varphi^k(1) - \varphi^k(0)) & \gamma^k (1)(\varphi^k(0)-\varphi^k(1))
	\end{array}
	\right]
\left[ \begin{array}{cc}
       \gamma^k (1) & \gamma^k (1) \\
	\gamma^k (1) & -\gamma^k (1)
	\end{array}
	\right] \\[1.0ex]
&= \left[ \begin{array}{cc}
       2 \gamma^k (1)(\varphi^k (1) +\varphi^k(0)) \gamma^k(1) & {\bf 0} \\
	{\bf 0} & 2 \gamma^k (1)(\varphi^k (1) -\varphi^k(0)) \gamma^k(1)
	\end{array}
	\right] \\[1.0ex]
&= \left[ \begin{array}{cc}
	 2^{k+1} \diag(P_k + 2Q_k) & {\bf 0} \\
	{\bf 0} & 2^{k+1} \diag(P_k - 2Q_k)
	\end{array}
	\right],
\end{align*}
where by ${\bf 0}$ we mean the appropriately-sized matrix of all $0$'s.

Now, from \eqref{jac1} and \eqref{jac2} we see that
$[P_k + 2 Q_k, P_k - 2 Q_k] = P_{k+1}$, so the proof of the first claim
is complete.

Now let's verify (b):
\begin{align*}
\gamma^{k+1} (1) \varphi^{k+1} (0) \gamma^{k+1} (1) &=
	\left[ \begin{array}{cc}
		\gamma^k (1) & \gamma^k (1) \\
		\gamma^k (1) & -\gamma^k (1)
		\end{array}
		\right]
	\left[ \begin{array}{cc}
		\varphi^k (1) & \varphi^k (1) \\
		\varphi^k (1) & \varphi^k (1)
		\end{array}
		\right]
	\left[ \begin{array}{cc}
		\gamma^k (1) & \gamma^k (1) \\
		\gamma^k (1) & -\gamma^k (1)
		\end{array}
		\right]
\\[1.0ex]
&= \left[ \begin{array}{cc}
	2 \gamma^k (1)\varphi^k(1) & 2 \gamma^k(1) \varphi^k(1)  \\
	{\bf 0} & {\bf 0}
	\end{array}
	\right]
\left[ \begin{array}{cc}
       \gamma^k (1) & \gamma^k (1) \\
	\gamma^k (1) & -\gamma^k (1)
	\end{array}
	\right] \\[1.0ex]
&= \left[ \begin{array}{cc}
       4 \gamma^k (1)\varphi^k (1) \gamma^k(1) & {\bf 0} \\
	{\bf 0} &  {\bf 0}
	\end{array}
	\right] \\[1.0ex]
&= 4 \left[ \begin{array}{cc}
	 2^k \diag(P_k) & {\bf 0} \\
	{\bf 0} & {\bf 0}
	\end{array}
	\right] \\[1.0ex]
&= 2^{k+2} \diag(Q_{k+1})  , \\
\end{align*}
where we have used \eqref{qp}.
This completes the proof of (b).
\end{proof}

\begin{corollary}
The eigenvalues of $M_{\bf d} (2^k) $, with their multiplicities,
are as follows:
\begin{itemize}
\item $J_{k+1}$ with multiplicity $1$
\item $J_k$ with multiplicity $1$
\item $J_{k-i}$ and $-J_{k-i}$, each with multiplicity
	$2^{i-1}$, for $1 \leq i \leq k-3$.
\item $1$ and $-1$, each with multiplicity $3 \cdot 2^{k-3}$.
\end{itemize}

Furthermore, the basis for the eigenspace of each eigenvalue
can be read off from the respective columns of the Hadamard
matrix $H(2^k)$.
\end{corollary}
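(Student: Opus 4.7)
The plan is to deduce everything from the theorem just proved, using the fact that the Hadamard matrix $H:=\gamma^k(1)$ is symmetric and satisfies $H^2 = 2^k I$. First I would verify these two facts by induction on $k$ from the block decomposition $\gamma^{k+1}(1)=\bigl[\begin{smallmatrix}\gamma^k(1)&\gamma^k(1)\\\gamma^k(1)&-\gamma^k(1)\end{smallmatrix}\bigr]$: symmetry is immediate, and the Hadamard identity $HH^T=2^kI$ is standard. Combined with symmetry, this yields $H^{-1}=2^{-k}H$.

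Next I would rewrite part (a) of the theorem, with
\[
D:=\diag\bigl(J_{k+1},\,J_k,\,J_{k-1}\rho(1),\,J_{k-2}\rho^2(1),\,\ldots,\,J_1\rho^{k-1}(1)\bigr),
\]
as $HM_{\bf d}(2^k)H = 2^kD$, equivalently $M_{\bf d}(2^k) = 2^{-k}HDH$. For each standard basis vector $e_i$, a direct calculation using $H^2=2^kI$ gives $M_{\bf d}(2^k)(He_i)=2^{-k}HDH(He_i)=D_{ii}(He_i)$. Hence the $i$-th column of $H$ is an eigenvector of $M_{\bf d}(2^k)$ whose eigenvalue is the $i$-th diagonal entry of $D$; since the columns of $H$ are linearly independent, they form a complete eigenbasis, which establishes the ``columns of $H(2^k)$'' assertion.

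The remaining task is to count the multiplicities on the diagonal of $D$. Because each image under $\rho$ is a length-$2$ word with exactly one $+1$ and one $-1$, a straightforward induction shows that for $i\ge 1$ the vector $\rho^i(1)$ has exactly $2^{i-1}$ entries equal to $+1$ and $2^{i-1}$ equal to $-1$. Therefore the block $J_{k-i}\rho^i(1)$ contributes $+J_{k-i}$ and $-J_{k-i}$ each with multiplicity $2^{i-1}$, for $1\le i\le k-1$. The values $J_3,J_4,\ldots,J_{k-1}$ are pairwise distinct and distinct from $J_k$ and $J_{k+1}$, giving the claimed multiplicities $2^{i-1}$ for $1\le i\le k-3$. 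The only subtlety — and really the only place where any care is needed — is that $J_1=J_2=1$: the blocks with $i=k-1$ and $i=k-2$ must be merged, and their contributions add to $2^{k-2}+2^{k-3}=3\cdot 2^{k-3}$ for each of $\pm 1$, accounting for the anomalous multiplicity. No step poses a real obstacle; the whole argument is bookkeeping once the diagonalization of the preceding theorem is in hand.
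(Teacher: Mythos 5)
Your proposal is correct and follows essentially the same route as the paper: the paper's proof likewise invokes the diagonalization $H M_{\bf d}(2^k) H = 2^k\diag(P_k)$ together with $H=H^T$ and $HH^T=2^kI$ to conclude that $M_{\bf d}(2^k)$ is conjugate to the diagonal matrix via $H$, with the columns of $H$ as eigenvectors. You simply spell out the details the paper leaves implicit, notably the count of $\pm 1$ entries in $\rho^i(1)$ and the merging of the $J_1=J_2=1$ blocks into the multiplicity $3\cdot 2^{k-3}$.
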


\begin{proof}
This follows immediately from the fact that
$$M_{\bf d}(2^{k+1}) = H(2^k) M_{\bf d}(2^k) H(2^k) =
2^k \diag(P_k) ,$$
and $H(2^k) = H(2^k)^T$, and
$H(2^k) H(2^k)^T = 2^k I$.
\end{proof}

\begin{corollary}
The eigenvalues of $M_{\overline{\bf d}} (2^k) $, with their multiplicities,
are almost the same:

\begin{itemize}
\item $J_k$ with multiplicity $1$
\item $-J_k$ with multiplicity $1$
\item $J_{k-i}$ and $-J_{k-i}$, each with multiplicity
	$2^{i-1}$, for $1 \leq i \leq k-3$.
\item $1$ and $-1$, each with multiplicity $3 \cdot 2^{k-3}$.
\end{itemize}
Again, the basis for the eigenspace of each eigenvalue
can be read off from the respective columns of the Hadamard
matrix $H(2^k)$.
\end{corollary}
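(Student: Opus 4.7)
The plan is to reduce the claim to the preceding corollary via the entrywise identity $M_{\overline{\bf d}}(2^k) = E - M_{\bf d}(2^k)$, where $E$ denotes the $2^k \times 2^k$ all-ones matrix (an immediate consequence of $\overline{d}_n = 1 - d_n$). Since the first corollary already exhibits the columns of the Hadamard matrix $H(2^k) = \gamma^k(1)$ as an orthogonal eigenbasis of $M_{\bf d}(2^k)$ with explicit eigenvalues, the task reduces to checking that the same basis also diagonalizes the rank-one matrix $E$ and then subtracting eigenvalue by eigenvalue.

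A short induction on the Sylvester recursion $\gamma^{k+1}(1) = \bigl[\begin{smallmatrix}\gamma^k(1)&\gamma^k(1)\\\gamma^k(1)&-\gamma^k(1)\end{smallmatrix}\bigr]$ shows that the first column $v_1$ of $H(2^k)$ is the all-ones vector and that every other column is orthogonal to $v_1$. Consequently $E = v_1 v_1^T$ acts as $2^k$ on $v_1$ and as $0$ on every other column of $H(2^k)$. Combining this with part~(a) of the theorem, the eigenvalue of $M_{\overline{\bf d}}(2^k)$ on $v_1$ is $2^k - J_{k+1} = J_k$ (by Jacobsthal relation \eqref{jac3}), while on any other column $h_i$ of $H(2^k)$ it is simply $-\lambda_i$, the negative of the corresponding $M_{\bf d}(2^k)$-eigenvalue.

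To finish, I would unpack the multiplicities. The $M_{\bf d}(2^k)$-eigenvalues beyond $v_1$ form, by part~(a), the signed family $\{J_k\} \cup \{\pm J_{k-j} : 1 \leq j \leq k-1\}$, in which each pair $\pm J_{k-j}$ occurs with multiplicity $2^{j-1}$ (since $\rho^j(1)$ has exactly $2^{j-1}$ entries of each sign). As this multiset is invariant under $\lambda \mapsto -\lambda$, negation leaves it unchanged. Hence $M_{\overline{\bf d}}(2^k)$ has eigenvalues $J_k$ (once), $-J_k$ (once), and $\pm J_{k-j}$ each with multiplicity $2^{j-1}$ for $j = 1,\ldots,k-1$; folding the $j=k-2$ and $j=k-1$ contributions (where $J_2=J_1=1$) into the $\pm 1$ tally yields the advertised $2^{k-3}+2^{k-2}=3\cdot 2^{k-3}$ of each of $\pm 1$, and the eigenspaces are spanned by the same columns of $H(2^k)$. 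No real obstacle is expected: beyond this bookkeeping, the only arithmetic input is the Jacobsthal identity $J_{k+1} + J_k = 2^k$.
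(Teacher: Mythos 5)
Your proposal is correct and follows essentially the same route as the paper: the paper's proof likewise rests on the identity $M_{\overline{\bf d}}(2^k)=E_k-M_{\bf d}(2^k)$ together with $\gamma^k(1)E_k\gamma^k(1)=\diag(2^{2k},0,\ldots,0)$, which is exactly your rank-one observation $E_k=v_1v_1^T$ in conjugated form, and the eigenvalue $2^k-J_{k+1}=J_k$ via \eqref{jac3}. Your write-up merely fills in the bookkeeping the paper leaves implicit (and the phrase about the multiset beyond $v_1$ being negation-invariant is slightly loose, since the lone $J_k$ on the second column is not self-paired, but you track its image $-J_k$ correctly in the final tally).
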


\begin{proof}
This follows immediately from the fact that
$M_{\overline{\bf d}}(2^k)=E_k-M_{\bf d}(2^{k})$, for the
matrix $E_k$ that has all entries equal to $1$, and the
fact that
$\gamma^k(1) E_k \gamma^k (1) = \diag(2^{2k}, 0, 0, \ldots, 0 ) $.

\end{proof}

Finally, we get the proof of Theorem~\ref{main}:

\begin{proof}
The product of the eigenvalues of $M_{\bf d}(2^k)$ is
$$ - J_{k+1} \prod_{3 \leq i \leq k} J_i^{2^{k-i}},$$
and the product of the eigenvalues of $M_{\overline{\bf d}}(2^k)$ is
$$ J_k \prod_{3 \leq i \leq k} J_i^{2^{k-i}} .$$
\end{proof}

\section{General orders}

The Hankel determinants $\Delta_{\bf d}(2^k)$ and $\Delta_{\overline
{\bf d}}(2^k)$ are products of Jacobsthal numbers that correspond to
eigenvalues of their associated Hankel matrices.  For general $n$, the
Hankel determinants $\Delta_{\bf d}(n)$ are also products of Jacobsthal
numbers (as we will prove below), but these numbers no longer correspond
to eigenvalues of the Hankel matrix $M_{\bf d}(n)$. The Hankel
determinants $\Delta_{\overline {\bf d}}(n)$ are equal to zero if $n$
is odd.

\subsection{Preliminary observations}

An inspection of the Hankel determinants quickly reveals recursive
formulas, such as:

\begin{proposition}
For $k \geq 1$ we have $\Delta_{\bf d} (3 \cdot 2^k) =
\Delta_{\bf d}(2^k)$ and $\Delta_{\overline{\bf d}} (3 \cdot 2^k) =
\Delta_{\overline{\bf d}}(2^k)$.
\end{proposition}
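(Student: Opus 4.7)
The plan is to exploit the $3\times 3$ block decomposition of $M_{\bf d}(3\cdot 2^k)$ into $2^k\times 2^k$ blocks. The $(a,b)$ block, $0\le a,b\le 2$, has entries $d_{(a+b)2^k+r+c}$, so it depends only on $s:=a+b\in\{0,1,2,3,4\}$; call it $B_s$.

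The key combinatorial input is that, writing $\sigma$ for the one-dimensional morphism $1\mapsto 10,\,0\mapsto 11$, the words $\sigma^k(0)$ and $\sigma^k(1)$ coincide everywhere except in the last letter (a straightforward induction on $k$, using $d_{2j}=1$ and $d_{2j+1}=1-d_j$). Since $\mathbf d=\sigma^k(d_0)\sigma^k(d_1)\sigma^k(d_2)\cdots$ and $d_0d_1\cdots d_5=101110$, a case analysis on the range of $r+c$ reveals that
\[
B_0=B_2=B_3=B_4=M_k, \qquad B_1=M_k+R,
\]
where $M_k:=M_{\bf d}(2^k)=\varphi^k(1)$ and $R$ is the antidiagonal matrix supported on $r+c=2^k-1$ whose nonzero entries all equal $\varepsilon:=1-2d_{2^k-1}=\pm 1$.

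Next I would perform two determinant-preserving block row operations, subtracting block row $3$ from block rows $1$ and $2$, to bring $M_{\bf d}(3\cdot 2^k)$ into the form
\[
\begin{pmatrix} 0 & R & 0 \\ R & 0 & 0 \\ M_k & M_k & M_k \end{pmatrix}.
\]
Swapping block rows $1$ and $2$ amounts to $2^k$ row transpositions; for $k\ge 1$ this is an even number, so the determinant is unchanged. The resulting matrix is block lower triangular with diagonal blocks $R,R,M_k$, so its determinant equals $\det(R)^2\,\Delta_{\bf d}(2^k)$. Since $R$ is $\varepsilon$ times the reversal permutation matrix, $\det(R)^2=1$, yielding the first identity.

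The complementary identity is handled in parallel. Writing $M_{\overline{\bf d}}(n)$ as the all-ones matrix minus $M_{\bf d}(n)$, each block $B_s$ above is replaced by an all-ones matrix minus $B_s$. Subtracting block row $3$ from block rows $1$ and $2$ cancels every all-ones contribution, leaving the same reduced form with $-R$ in place of $R$ and $M_{\overline{\bf d}}(2^k)$ in place of $M_k$. Since $\det(-R)^2=1$ as well, we obtain $\Delta_{\overline{\bf d}}(3\cdot 2^k)=\Delta_{\overline{\bf d}}(2^k)$. The main obstacle is the case-by-case verification that $B_0=B_2=B_3=B_4=M_k$ and $B_1-M_k=R$: the bookkeeping is routine, but for each $s$ and each pair $(r,c)$ one must carefully track which $2^k$-block of $\mathbf d$ the index $s\cdot 2^k+r+c$ lies in, and invoke the ``differ-in-last-letter-only'' lemma exactly at the boundary position $r+c=2^k-1$.
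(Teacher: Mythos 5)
Your proof is correct and takes essentially the same route as the paper's: the same $3\times 3$ block decomposition into $2^k\times 2^k$ blocks with $B_1-B_0$ an antidiagonal matrix, the same block row operations leading to the reduced matrix with blocks $R$, $R$, $M_k$ whose determinant is $\det(R)^2\,\Delta_{\bf d}(2^k)$, and the same treatment of the complementary sequence. Incidentally, your value $1-2d_{2^k-1}=(-1)^{k+1}$ for the antidiagonal entries is the correct one (the paper writes $(-1)^k$), but the discrepancy is immaterial since only $\det(R)^2=1$ enters the final count.
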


\begin{proof}
We consider $\Delta_{\bf d}(3\cdot 2^k)$ first.
The result is easy to check for $k = 1$.  For $k \geq 2$,
the corresponding Hankel matrix is easily seen to be
$$ \left[ \begin{array}{ccc}
P & Q & P \\
Q & P & P \\
P & P & P
\end{array}
\right] $$
where $P = \varphi^n(1)$, $Q = \varphi^n (0)$.

Using Gaussian elimination, we can subtract the third row from each
of the first two rows,
obtaining
$$ \left[ \begin{array}{ccc}
0 & R & 0 \\
R & 0 & 0 \\
P & P & P
\end{array}
\right] $$
Now an easy induction gives that $R$ is an anti-diagonal matrix of
all $(-1)^k$'s, so for $k \geq 2$ we have $\det R = 1$.
We conclude that the determinant is indeed $\det P$.

If the same computation is carried out for $\Delta_{\overline{\bf d}}(3\cdot 2^k)$,
then we arrive at
$$ \left[ \begin{array}{ccc}
0 & -R & 0 \\
-R & 0 & 0 \\
\overline P & \overline P & \overline P
\end{array}
\right] $$
where $\overline P$ is a complementary matrix and $-R$ is an anti-diagonal matrix of
all $(-1)^{k+1}$'s. We conclude that the determinant is indeed $\det \overline P$.
\end{proof}

Similar computations give

\begin{proposition}
\ \vphantom{a}
\begin{itemize}
\item[(a)]  $\Delta_{\bf d} (5 \cdot 2^k) = \Delta_{\bf d}(2^k)$ and
$\Delta_{\overline {\bf d}} (5 \cdot 2^k) = \Delta_{\overline{\bf d}}(2^k)$ for
$k \geq 0$.

\item[(b)] $\Delta_{\bf d} (7 \cdot 2^k) = \Delta_{\bf d}(2^k)$
and $\Delta_{\overline{\bf d}} (7 \cdot 2^k) = \Delta_{\overline{\bf d}}(2^k)$
for
$k \geq 1$.

\item[(c)] $\Delta_{\bf d} (2^k - 1) =
- \prod_{3 \leq i \leq k-1} J_i^{2^{k-i}} $ for $k \geq 3$.
\end{itemize}
\end{proposition}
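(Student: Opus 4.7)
\emph{Plan.}
All three identities rest on the block decomposition of $M_{\bf d}(n\cdot 2^k)$ into an $n\times n$ array of $2^k\times 2^k$ blocks. The self-similarity of the period-doubling word implies (by a short induction on $k$, using $d_{2m}=1$ and $d_{2m+1}=1-d_m$) that the $(i,j)$ block equals $\varphi^k(d_{i+j})$, hence is either $P:=\varphi^k(1)$ or $Q:=\varphi^k(0)$. Their difference $R:=P-Q$ is an anti-diagonal matrix with all entries $\pm 1$, so $\det R=\pm 1$; the analogous decomposition of $M_{\overline{\bf d}}(n\cdot 2^k)$ uses $\overline P=E-P$ and $\overline Q=E-Q$, with $\overline P-\overline Q=-R$.

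For parts (a) and (b) I would write down the $5\times 5$ and $7\times 7$ block patterns by reading off $d_0,\dots,d_{2n-2}$, then repeat the strategy of the $3\cdot 2^k$ case. Subtracting a single distinguished block row from each of the others clears most entries, leaving only $\pm R$ and zero blocks outside one ``reference'' row; a short sequence of further block row and column operations together with a block-row permutation (whose sign contribution $(-1)^{2^k}$ equals $+1$ for $k\ge 1$) brings the matrix to block lower-triangular form with $n-1$ copies of $\pm R$ and a single $\pm P$ on the diagonal. The block-triangular determinant formula then yields $\Delta_{\bf d}(n\cdot 2^k)=\pm(\det R)^{n-1}\det(\pm P)=\pm\Delta_{\bf d}(2^k)$, and the analogous reduction on $M_{\overline{\bf d}}(n\cdot 2^k)$ (with $-R$ in place of $R$) gives the companion identity. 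The only real obstacle here is sign bookkeeping; the reductions themselves are mechanical.

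For part (c), where the block approach is awkward because $2^k-1$ is not a multiple of $2^k$, I would instead combine Jacobi's identity with the spectral decomposition from Theorem~3. Writing $H=\gamma^k(1)$, the identity $HM_{\bf d}(2^k)H=2^k\diag(P_k)$ together with $H^2=2^kI$ gives $M_{\bf d}(2^k)^{-1}=2^{-k}H\diag(P_k^{-1})H$; since $H$ is symmetric with entries $\pm 1$, the $(2^k,2^k)$ entry of this inverse equals $2^{-k}\sum_\lambda 1/\lambda$, summed over eigenvalues of $M_{\bf d}(2^k)$ with multiplicity. By Corollary~4, every eigenvalue other than $J_{k+1}$ and $J_k$ occurs in a cancelling pair $\pm\mu$, so the sum collapses to $1/J_k+1/J_{k+1}=2^k/(J_kJ_{k+1})$, using the Jacobsthal identity $J_k+J_{k+1}=2^k$ that follows from \eqref{jac3}. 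Jacobi's identity therefore yields
\[
\Delta_{\bf d}(2^k-1)=\Delta_{\bf d}(2^k)\cdot\bigl(M_{\bf d}(2^k)^{-1}\bigr)_{2^k,2^k}=\frac{\Delta_{\bf d}(2^k)}{J_kJ_{k+1}},
\]
and substituting the product formula for $\Delta_{\bf d}(2^k)$ from Theorem~1 cancels the single $J_k$ and $J_{k+1}$ factors and produces $-\prod_{3\le i\le k-1}J_i^{2^{k-i}}$, as required.
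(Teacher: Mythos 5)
Your parts (a) and (b) follow exactly the template the paper uses for $\Delta_{\bf d}(3\cdot 2^k)$ --- indeed the paper offers no proof of this proposition at all beyond the remark that ``similar computations give'' it --- so there the only thing separating your sketch from a proof is the sign bookkeeping you explicitly defer. Be aware that this bookkeeping is where the hypotheses $k\geq 0$ versus $k\geq 1$ come from: $\det R=-1$ when $k=1$ while $\det R=+1$ for $k=0$ and $k\geq 2$, and the $k=0$ instance of (b) is genuinely false ($\Delta_{\bf d}(7)=-1\neq 1=\Delta_{\bf d}(1)$), so the signs cannot be waved away entirely. Part (c) is where you depart from the paper in a substantive way: instead of block elimination or the later recursions, you combine the cofactor formula $\Delta_{\bf d}(2^k-1)=\Delta_{\bf d}(2^k)\cdot\bigl(M_{\bf d}(2^k)^{-1}\bigr)_{2^k,2^k}$ with the diagonalization $M_{\bf d}(2^k)=2^{-k}H\diag(P_k)H$, note that $(M^{-1})_{2^k,2^k}=2^{-k}\sum_\lambda \lambda^{-1}$ because $H$ has $\pm 1$ entries, and let the paired eigenvalues $\pm J_{k-i}$ cancel, leaving $1/J_k+1/J_{k+1}=2^k/(J_kJ_{k+1})$ via the identity \eqref{jac3}. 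I have checked this chain: it yields $\Delta_{\bf d}(2^k-1)=\Delta_{\bf d}(2^k)/(J_kJ_{k+1})$, which matches both the stated product formula and the data (e.g.\ $-495/(5\cdot 11)=-9$). This is a clean and complete proof of (c), arguably more illuminating than anything the paper supplies; its cost is that it leans on the full spectral description and Theorem~\ref{main}, whereas the elimination route is self-contained and is the one that extends to the general-order recursions developed later in the paper.
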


\subsection{Two recursions}

We derive two recursions to compute $\Delta_{\bf d}(n)$ and
$\Delta_{\overline{\bf d}}(n)$.  The derivation is the same for both
determinants. We restrict our attention to the first determinant, and
leave it to the reader to verify the recursion for the second
determinant.  If the second significant digit of the binary expansion
of $n$ is one, then we apply the first recursion.  If it is zero, then
we apply the second recursion.  Each recursion produces a power of a
Jacobsthal number and reduces $\Delta_{\bf d}(n)$ to $\Delta_{\bf
d}(n')$.  If $n$ has binary expansion of length $k$, then the binary
expansion of $n'$ is $k-1$. The recursion also produces a power of $2$,
which may be positive or negative, but since we know by
\cite[Prop.~2.2]{Allouche&Peyriere&Wen&Wen:1998}
that our Hankel determinants are odd
(for $\bf d$, not for $\overline {\bf d}$), we can ignore these
powers.

\subsubsection{Recursion one}
The Hankel matrix $M_{\bf d}(n)$ is an $n\times n$ submatrix in the larger Hankel matrix $M_{\bf d}(m)$ for any $n\leq m$.
We introduce some more notation. We write $P_k$ for $M_{\bf d}(2^k)$,
and $Q_k$ for $P_k-(-1)^kD_k$, where $D_k$ is the $2^k\times 2^k$ anti-diagonal matrix with all ones on the diagonal.
Our recursion involves the matrix $M_{i,k}(j)$,
which is the $j\times j$ submatrix of $J_iP_k+J_{i-1}Q_k$
consisting of the first $j$ columns and the first $j$ rows, where as before
$J_i$ is the $i$-th Jacobsthal number.
We denote the determinant of $M_{i,k}(j)$ by $\Delta_{i,k}(j)$.
If $i=1$,
then $M_{i,k}(j)$ is equal to the period doubling Hankel matrix
$M_{\bf d}(j)$, and
its determinant $\Delta_{i,k}(j)$ is equal to $\Delta_{\bf d}(j)$.
If $j\leq 2^{k-1}$, then the $j\times j$ blocks in $P_k$ and $Q_k$ coincide,
$M_{i,k}(j)$ is equal to $2^{i-1}M_{\bf d}(j)$,
and its determinant is equal to $2^{(i-1)j}\Delta_{\bf d}(j)$.
So the only interesting values are $2^{k-1}< j\leq 2^k$,
and we will only consider such $j$.

\begin{lemma}[Recursion one] If $2^{k}+2^{k-1}< j\leq  2^{k+1}$ then
\[\Delta_{i,k+1}(j)=\epsilon_k\cdot \frac{J_i^{2^{k}}}{2^{2^{k+1}-j}}\cdot\Delta_{i+1,k}(j-2^{k})\]
where 
$$\epsilon_k = \begin{cases}
	1,  & \text{if $k > 1$}; \\
	-1, & \text{if $k = 1$}.
	\end{cases}
$$
\end{lemma}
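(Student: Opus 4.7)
The plan is to diagonalize the full $2^{k+1}\times 2^{k+1}$ matrix $N:=J_iP_{k+1}+J_{i-1}Q_{k+1}$ by a Hadamard-like conjugation, and then to apply Jacobi's complementary minor identity to cut $\det M_{i,k+1}(j)$ down to a $t\times t$ sub-determinant of $N^{-1}$, where $t:=2^{k+1}-j$. The hypothesis $j>2^k+2^{k-1}$ is used in exactly one place: it forces $t<2^{k-1}$, which is small enough to kill an otherwise annoying anti-diagonal contribution.

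First I would establish the block decompositions $P_{k+1}=\left(\begin{smallmatrix}P_k & Q_k\\ Q_k & P_k\end{smallmatrix}\right)$ and $Q_{k+1}=\left(\begin{smallmatrix}P_k & P_k\\ P_k & P_k\end{smallmatrix}\right)$. The first is the morphism $\varphi$ applied once; the second then follows from $D_{k+1}=\left(\begin{smallmatrix}0 & D_k\\ D_k & 0\end{smallmatrix}\right)$ together with the identity $Q_k+(-1)^kD_k=P_k$. Using $J_i+J_{i-1}=2^{i-1}$ and $2^{i-1}+J_{i-1}=J_{i+1}$ (both easy consequences of \eqref{jac3}) one rewrites $N=\left(\begin{smallmatrix}X & Y\\ Y & X\end{smallmatrix}\right)$ with $X=2^{i-1}P_k$ and $Y=J_{i-1}P_k+J_iQ_k$. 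Conjugation by $H=\left(\begin{smallmatrix}I & I\\ I & -I\end{smallmatrix}\right)$ (which satisfies $H^2=2I$) then block-diagonalises: $HNH=2\,\diag(R,S)$ where $R:=X+Y=J_{i+1}P_k+J_iQ_k$ is precisely the matrix whose leading $m\times m$ minor is $\Delta_{i+1,k}(m)$ for $m:=j-2^k$, while $S:=X-Y=(-1)^kJ_iD_k$ is a scalar multiple of the anti-diagonal. Inverting yields $N^{-1}=\tfrac12 H\,\diag(R^{-1},S^{-1})\,H$, whose lower-right $2^k\times 2^k$ block equals $\tfrac12(R^{-1}+S^{-1})$.

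Now Jacobi's complementary minor identity applied to $N$ with $K=\{1,\ldots,j\}$ gives $\det M_{i,k+1}(j)=\det N\cdot\det N^{-1}[K^c;K^c]$ (the sign is $+1$ since $K$ is used for both rows and columns). The set $K^c=\{j+1,\ldots,2^{k+1}\}$ lies entirely inside the lower-right $2^k\times 2^k$ block of $N^{-1}$ and has size $t<2^{k-1}$; the corresponding bottom-right $t\times t$ submatrix of $D_k$ is therefore identically zero, so the $S^{-1}$ term drops out, and the relevant sub-block of $N^{-1}$ equals $\tfrac12$ times the bottom-right $t\times t$ block of $R^{-1}$. A second application of Jacobi, now to $R$, converts that minor into $\Delta_{i+1,k}(m)/\det R$. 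Combining with $\det N=\det R\cdot\det S$ causes $\det R$ to cancel, leaving $\det M_{i,k+1}(j)=\det S\cdot\Delta_{i+1,k}(m)/2^t$. Finally $\det S=((-1)^kJ_i)^{2^k}\det D_k=J_i^{2^k}(-1)^{2^{k-1}}$, since $(-1)^{k\cdot 2^k}=1$ for $k\ge 1$ and $\det D_k=(-1)^{2^{k-1}}$; the factor $(-1)^{2^{k-1}}$ equals $-1$ when $k=1$ and $+1$ when $k\ge 2$, which is exactly $\epsilon_k$.

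The main obstacle I anticipate is sign bookkeeping: verifying that both uses of Jacobi contribute trivial signs (which they do because the index sets are invariant under swapping rows and columns), and correctly identifying $\det D_k=(-1)^{2^{k-1}}$, which is the only source of the $k=1$ exception. Invertibility of $R$ (and hence of $N$) also needs to be recorded; this follows either from the Hadamard diagonalization of $P_k$ and $Q_k$ from the preceding section, which exhibits $\det R$ as an explicit nonzero product of Jacobsthal numbers, or, more painlessly, from the fact that the asserted identity is polynomial in the matrix entries and may be proved on the dense open locus where $R$ is invertible.
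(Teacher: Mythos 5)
Your proof is correct, and it reaches the same algebraic skeleton as the paper by a genuinely different mechanism. Both arguments rest on writing $J_iP_{k+1}+J_{i-1}Q_{k+1}$ as $\left(\begin{smallmatrix}X&Y\\Y&X\end{smallmatrix}\right)$ with $X+Y=J_{i+1}P_k+J_iQ_k$ (the source of $\Delta_{i+1,k}$) and $X-Y=(-1)^kJ_iD_k$ (the source of $J_i^{2^k}$ and of the power of $2$), and both use the hypothesis $j>2^k+2^{k-1}$ only to make a $t\times t$ corner of the anti-diagonal $D_k$ vanish, where $t=2^{k+1}-j$. The paper extracts the leading $j\times j$ minor by performing block Gaussian elimination and tracking, row by row and column by column, which pieces of which blocks survive; this is elementary but forces a delicate sign analysis involving $\det D_{j-2^k}$, $\det D_{2^{k+1}-j}$, and the positions of these blocks. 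You instead conjugate by $H=\left(\begin{smallmatrix}I&I\\I&-I\end{smallmatrix}\right)$ to fully block-diagonalize $N$ as $\frac12 H\diag(R,S)H$ and then apply Jacobi's complementary minor identity twice, converting the leading principal minor of $N$ into a trailing principal minor of $N^{-1}$ and then back into a leading principal minor of $R$. This buys you a much cleaner sign computation: the entire $\epsilon_k$ comes from $\det S=J_i^{2^k}\det D_k=J_i^{2^k}(-1)^{2^{k-1}}$, with no positional bookkeeping. The price is the need for invertibility of $R$ and $N$, which you correctly discharge either via the Hadamard diagonalization of Section 3 or by the standard device of proving the polynomial identity generically (e.g., over $\mathbb{Q}(a,b)$ with $(a,b)$ replacing $(J_i,J_{i-1})$, where $\det R\neq 0$ since it is nonzero at $(0,1)$) and specializing. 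All the individual computations you invoke --- the block forms of $P_{k+1}$, $Q_{k+1}$, $D_{k+1}$, the identities $J_i+J_{i-1}=2^{i-1}$ and $2^{i-1}+J_{i-1}=J_{i+1}$, the formula for the lower-right block of $N^{-1}$, the vanishing of the bottom-right $t\times t$ corner of $D_k$ for $t<2^{k-1}$, and the sign-free form of Jacobi for coincident row and column index sets --- check out.
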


Observe that the recursion reduces $j$ by $2^{k}$ which is equal to the power
of the Jacobsthal number that is produced by the recursion.

\begin{proof} By definition $H_{i,k+1}(j)$ is the $j\times j$ block in
the matrix $J_iP_{k+1}+J_{i-1}Q_{k+1}$, which is equal to
\[
\left[
\begin{matrix}
J_i P_{k}+J_{i-1}P_{k}&J_i Q_{k}+J_{i-1}P_{k}\\
J_i Q_{k}+J_{i-1}P_{k}&J_i P_{k}+J_{i-1}P_{k}
\end{matrix}
\right]
=
\left[
\begin{matrix}
2^{i-1}P_{k}&J_i Q_{k}+J_{i-1}P_{k}\\
J_i Q_{k}+J_{i-1}P_{k}&2^{i-1}P_{k}
\end{matrix}
\right]
\]
Abbreviating this expression, we write this matrix as
\[
\left[
\begin{matrix}
A&B\\
B&A
\end{matrix}
\right]
\]
Perform Gaussian elimination by subtracting ``row'' 1 of this $2\times 2$
block matrix from row 2, and then subtract ``column'' 1 from column 2 to get
\[
\left[
\begin{matrix}
A&A+B\\
B-A&0
\end{matrix}
\right].
\]
The lower left block $B-A$ is an anti-diagonal matrix $(-1)^{k+1}J_iD$.
The upper right block $A+B$ is equal to
$(J_{i}+2J_{i-1})P_{k}+J_iQ_{k}=J_{i+1}P_{k}+J_iQ_{k}$, i.e., it is
is equal to $M_{i+1,k}(2^{k-1})$.
We started out with the $j\times j$ submatrix in the entire matrix.
The recursion essentially reduces it to the $(j-2^k)\times(j-2^k)$ submatrix in the upper right block $A+B$ by
getting rid of the first column
$\left[\begin{matrix}A\\B-A\end{matrix}\right]$, 
as follows:

The $j\times j$ submatrix extends over the $j-2^k$ top rows of the lower block $B-A$.
The first $2^{k+1}-j$ columns of this $(j-2^k)\times 2k$ submatrix are zero
and the last $j-2^k$ columns form the anti-diagonal $(-1)^{k+1}J_iD_{j-2^k}$.
Ignoring the sign of the determinant for the moment,
the submatrix contributes a factor $J_i^{j-2^k}$
to the determinant. We can remove the final $j-2^k$ rows and the columns of the anti-diagonal
matrix, after which we are left with a $2^k\times 2^k$ matrix. Let's denote it by $R$.
It consists of
the first $2^{k+1}-j$ columns of $A$ and the first $j-2^k$ columns of $A+B$,
which, as we noted above, is equal to $M_{i+1,k}(2^k)$. Another equality is
$A+B=2A+(-1)^{k+1}J_iD_k$.
So our $2^k\times 2^k$ matrix $R$ consists of a block from $A$ and a block from $2A+(-1)^{k+1}J_iD_k$.
By our conditions on $j$ (and this is the first place in the proof where we use this), the second block has as least as many columns as the first.
Perform a Gaussian elimination in which every column in the second
block is divided by two and subtracted from the corresponding column in
the first block.  This reduces $R$ to a matrix
\[
\left[
\begin{matrix}
0&N_1\\
(-1)^{k}\frac{J_i}2 D_{2^{k+1}-j}&N_2
\end{matrix}
\right]
\]
The upper right block $N_1$ corresponds to the $(j-2^k)\times (j-2^k)$
proper submatrix of $A+B$, which as we have seen above, is equal to
$M_{i+1,k}(j-2^k)$. Here we need that $j>2^k+2^{k+1}$.  The lower left
block contributes a factor $\left(\frac {J_i}{2}\right)^{2^{k+1}-j}$.
Ignoring the signs for the moment, we have reduced the matrix to
$M_{i+1,k}(j-2^k)$ and have obtained a factor
$\frac{J_i^{2^k}}{2^{2^{k+1}-j}}$, as required.

Now we still need to consider the sign.
We found $2^k$ factors in total,
the first $2^{k+1}-j$ were $J_i$ and the remaining $j-2^k$ were $J_i/2$.
The first came with a sign $(-1)^{k+1}$ and the latter with a sign
$(-1)^k$, which together produce the sign $(-1)^j$.
The determinant of an anti-diagonal $D_m$ is equal to $1$ if
$m=0$ or $1\text{ mod }4$ and $-1$ otherwise.
We encountered both $D_{j-2^k}$ and $D_{2^{k+1}-j}$.
If $k>1$, then this produces the sign $(-1)^j$,
but if $k=1$, it produces $(-1)^{j-1}$.
Finally, we need to observe the position of these two anti-diagonal
matrices as blocks in a matrix. Using that a matrix
$\left[\begin{matrix}0& S \\T&U\end{matrix}\right]$ with $s\times s$
block $S$ and $t\times t$ block $T$ has determinant
$(-1)^{st}\det(S)\det(T)$, this produces a factor $(-1)^{(j-2^k)^2}$ for
the first anti-diagonal matrix and a factor $(-1)^{(2^{k+1}-j)(j-2^k)}$
for the second.  Together this produces the sign $+1$.  If we consider
all three factors that we found, then we see that they produce $+1$ if
$k>1$ but $-1$ if $k=1$, which is $\epsilon_k$.
\end{proof}

For $\Delta_{\overline{\bf d}}(j)$ the computations are the same, but
we need to change some signs.  Whenever there is a $(-1)^{k+1}$ in the
computation above, it now becomes a $(-1)^k$, and vice versa.  The net
result is that recursion one applies to $\overline{\bf d}$ as well.

\subsubsection{Recursion two}

Our second recursion deals with $j$ that
have second significant digit zero in their binary expansion.

\begin{lemma}[Recursion two]
If $2^k< j \leq 2^k+2^{k-1}$ then
\[\Delta_{i,k+1}(j)=(-1)^j2^{(i-1)(2^{k+1}-j)}\cdot J_i^{2j-2^{k+1}}\cdot \Delta_{1,k}(2^{k+1}-j)\]
\end{lemma}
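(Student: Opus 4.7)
\medskip

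The plan is to follow the same overall strategy as in Recursion one, but tailored to the regime $j\le 2^k+2^{k-1}$. Starting from the block decomposition of $J_iP_{k+1}+J_{i-1}Q_{k+1}$ obtained in the previous proof, I would restrict to the leading $j\times j$ submatrix. Writing $m=j-2^k$ (so $0<m\le 2^{k-1}$), all of the top-left block $A=2^{i-1}P_k$ is included; the top-right block uses only the first $m$ columns of $B=2^{i-1}P_k-(-1)^kJ_iD_k$; the bottom-left uses only the first $m$ rows of $B$; and the bottom-right is the $m\times m$ top-left corner of $A$. The key structural fact in this regime is that $D_k[1{:}m,1{:}m]=0$ whenever $m\le 2^{k-1}$, because the anti-diagonal lies outside that block.

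The first operation I would perform is to subtract column $c$ of the first block column from column $c$ of the second block column, for $c=1,\dots,m$. This is determinant-preserving. In the top block the difference $B-A=-(-1)^kJ_iD_k$ survives, while in the bottom block the difference vanishes by the observation above. After this, the new ``right'' columns contain exactly one nonzero entry apiece, forming an anti-diagonal pattern with value $(-1)^{k+1}J_i$ in the rows $2^k-m+1,\dots,2^k$. Laplace expansion along these $m$ columns then reduces to a single term: a factor of $((-1)^{k+1}J_i)^m\det(D_m)$ times the complementary $2^k\times 2^k$ minor $T$, whose rows are rows $1,\dots,2^k-m$ of $2^{i-1}P_k$ stacked on top of rows $1,\dots,m$ of $2^{i-1}P_k-(-1)^kJ_iD_k$.

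The decisive simplification comes next: for $r=1,\dots,m$, subtract row $r$ of $T$ from row $2^k-m+r$ of $T$. Because $r\le m\le 2^k-m$, the $P_k$-parts of these two rows literally coincide, so the subtraction cancels them and leaves only the $-(-1)^kJ_iD_k$ contribution in the bottom rows. That contribution is supported precisely on the last $m$ columns and is again an anti-diagonal block. Consequently $T$ becomes block upper triangular, with upper-left block $2^{i-1}P_k[1{:}2^k-m,1{:}2^k-m]=2^{i-1}M_{\bf d}(2^k-m)$ and lower-right block $-(-1)^kJ_iD_m$. Multiplying everything out immediately produces the factor $2^{(i-1)(2^k-m)}J_i^{2m}\Delta_{\bf d}(2^k-m)$, which in terms of $j$ is $2^{(i-1)(2^{k+1}-j)}J_i^{2j-2^{k+1}}\Delta_{1,k}(2^{k+1}-j)$.

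The main obstacle is not the algebraic reduction but the sign bookkeeping. One has to combine (i) the Laplace expansion sign $(-1)^{\sum r_i+\sum c_i}$ for the chosen row-column indices, (ii) the two anti-diagonal determinants $\det(D_m)=(-1)^{m(m-1)/2}$ appearing both in the Laplace minor and in the reduced $T$, and (iii) the $(-1)^{k+1}$ versus $(-1)^k$ signs coming from the two different uses of $D_k$. The target sign is $(-1)^j=(-1)^m$ (using $k\ge 1$), so the verification amounts to showing that the above contributions collapse to $(-1)^m$ independently of $k$ and of the parity of $m(m-1)/2$; this will follow because the two anti-diagonal determinants appear with exponents that differ by an even amount and the $(-1)^{(k+1)m}$ and $(-1)^{km}$ factors combine to $(-1)^m$. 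Finally, one must remark that the entire argument is symmetric under $\mathbf{d}\leftrightarrow\overline{\mathbf{d}}$ (sign changes cancel as in Recursion one), so the same recursion applies to $\Delta_{\overline{\mathbf{d}}}$.
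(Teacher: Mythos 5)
Your proposal is correct and would yield the stated formula, but it reaches it by a somewhat different elimination than the paper's. The paper recycles the first half of the proof of Recursion One: block row and column operations turn the matrix into $\left[\begin{smallmatrix}A & A+B\\ B-A & 0\end{smallmatrix}\right]$, the anti-diagonal $B-A$ is peeled off to leave the $2^k\times 2^k$ matrix $R$ built from columns of $A$ and of $A+B=2A+(-1)^{k+1}J_iD_k$, and then (the only step specific to Recursion Two) twice each column of $A$ is subtracted from the corresponding column of the second block, exposing a second copy of $(-1)^{k+1}J_iD_{j-2^k}$ next to the corner $2^{i-1}M_{1,k}(2^{k+1}-j)$. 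You instead operate directly on the $j\times j$ corner: column operations on only the $m=j-2^k$ overhanging columns (using the correct and crucial observation that $D_k[1{:}m,1{:}m]=0$ for $m\le 2^{k-1}$ to kill the bottom-right block), a Laplace expansion along those columns, and then row operations inside the complementary minor. Both routes end at the same place --- two anti-diagonal blocks $(-1)^{k+1}J_iD_m$ whose determinants enter squared, times $\det\bigl(2^{i-1}M_{\bf d}(2^{k+1}-j)\bigr)$ --- and yours arguably makes the squaring more transparent, since it never forms $A+B$ or the intermediate matrix $R$. One correction to your closing sign sketch: the two scalar prefactors are both $((-1)^{k+1}J_i)^m$, so their product is $J_i^{2m}$ with sign $+1$, not $(-1)^m$; the factor $(-1)^m=(-1)^j$ comes entirely from your item (i), the Laplace position sign $(-1)^{\sum r+\sum c}$ for rows $2^k-m+1,\dots,2^k$ against columns $2^k+1,\dots,2^k+m$, which equals $(-1)^{2m\cdot 2^k+m}=(-1)^m$. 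With that reattribution the bookkeeping closes exactly as you predict.
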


Observe that the recursion reduces $j$ by $2j-2^{k+1}$, which is equal
to the power of the Jacobsthal number. Also observe that recursions one
and two both apply to $j=2^k+2^{k-1}$,
and we obtain the equality
$$J_i^{2^k}\Delta_{i,k-1}(2^{k-1})=\frac{J_i^{2^k}}{2^{k-1}}\Delta_{i+1,k-1}{(2^{k-1})}.$$

\begin{proof}
By the same argument as in the proof of the first recursion, we end up
with the $2^k\times 2^k$ matrix $R$, only now the $A$ block is at least
as large as the $2A+(-1)^{k+1}J_iD_{2^k}$ block.  This time, we can use
the first block to reduce the second. Subtract every column of $A$
twice from the corresponding column in the second block.  We end up
with the matrix
\[
\left[
\begin{matrix}
N_1&0\\
N_2&(-1)^{k+1}J_i D_{j-2^k}
\end{matrix}
\right]
\]
The upper left block $N_1$ is a $(2^{k+1}-j)\times(2^{k+1}-j)$
submatrix of $A$, which is shorthand notation for $2^{i-1}P_k$, so
$N_1$ is in fact equal to $2^{i-1}M_{1,k}(2^{k+1}-j)$. Remembering that
we already encountered a determinant of $(-1)^{k+1}J_iD_{j-2^k}$ in the
reduction, with an extra sign $(-1)^{(j-2^k)^2}=(-1)^j$, it follows
that
\[\Delta_{i,k+1}(j)=(-1)^j2^{(i-1)(2^{k+1}-j)}\Delta_{1,k}(2^{k+1}-j)\cdot\det\left((-1)^{k+1}J_i
D_{j-2^k}\right)^2 ,\]
which reduces to the required recursion.
\end{proof}

Again, the computations are the same for $\overline\bf d$, except for the
final equation.  There the sign $(-1)^{k+1}$ changes to $(-1)^k$, which does
not affect the outcome.

\subsubsection{Applying the two recursions}

The two recursions combine to reduce any $2^k< j \leq 2^{k+1}$ to a $2^{k-1}\leq j'\leq 2^k$.
Each recursion decreases the index $k$ in $\Delta_{i,k}(j)$ by one.
If we start with an odd $j$ that has a binary expansion of length $k$, then after $k-2$ applications of the recursions,
we end up at $\Delta_{i,2}(3)$ for some $i$ (ignoring the additional factors that we picked up during the recursion).
Then we need to apply recursion two and end at $\Delta_{1,1}(1)$, ignoring the power of two. For $\bf d$ this is
equal to $1$, or $J_1$, and for $\overline{\bf d}$ this is equal to zero, or $J_0$, which explains why $\Delta_{\overline{\bf d}}(j)=0$
for odd $j$.
It follows that if $j$ is odd, then $\Delta_{i,k}(j)$ is a product of $2$'s and Jacobsthal numbers.
If we start with even $j$ then we end at $\Delta_{i,1}(2)$ after $k-1$ applications of the recursions.
Now for $\bf d$ we have that $\Delta_{i,1}(2)$ is equal to
\[
\left|
\begin{matrix}
J_i+J_{i-1}&J_{i-1}\\
J_{i-1}&J_i+J_{i-1}
\end{matrix}
\right|=
J_{i}(J_i+2J_{i-1})=J_iJ_{i+1},
\]
while for $\overline{\bf d}$ it is equal to
\[
\left|
\begin{matrix}
0&J_{i}\\
J_{i}&0
\end{matrix}
\right|=
-J_i^2,
\]
and so the quotient of $\Delta_{\bf d}(j)$ and $\Delta_{\overline{\bf d}}(j)$ is $-J_{i+1}/J_i$
for even $j$. Therefore, we restrict our attention to $\bf d$, because the corresponding result for 
$\overline{\bf d}$ is straightforward. 

If the recursion ends at $\Delta_{i,1}(2)$, then it produces two more
Jacobsthal numbers. If it ends at $\Delta_{1,1}(1)$ it produces $1$, or
$J_1$. It follows that the powers of the Jacobsthal numbers in
$\Delta_{\bf d}(j)$ add up to $j$. Of course, some powers may be
trivial since $J_0=0$ and $J_1=J_2=1$. We are now ready to prove
Theorem~\ref{main2}, which we restate as follows.

\begin{theorem}
$\Delta_{\bf d}(j)$ is a product of powers of Jacobsthal numbers $J_i^{n_i}$. The exponent
$n_i$ decreases as the index $i$ increases, with the exception of the largest non-trivial
power $J_{i+1}^{n_{i+1}}$ for which it may be true that $n_{i+1}=n_i=1$.
The sign of $\Delta_{\bf d}(j)$ depends on $j\text{ mod }4$.
It is negative if and only if $j=2\text{ mod }4$ or $3\text{ mod }4$.
\end{theorem}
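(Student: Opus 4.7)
The plan is to induct on the binary length $\ell$ of $j$, applying Recursions One and Two to peel off one bit at a time. Starting from $\Delta_{\bf d}(j)=\Delta_{1,\ell}(j)$, each step produces a factor of the form $\pm 2^{\alpha}\,J_{i_t}^{n_t}$, decreases the level by one, and updates the running index by $i\mapsto i+1$ (recursion one) or $i\mapsto 1$ (recursion two). After $\ell-1$ steps we arrive at either $\Delta_{1,1}(1)=1$ (for odd $j$) or $\Delta_{i,1}(2)=J_iJ_{i+1}$ (for even $j$). Since the reduction in $j$ at each step equals the Jacobsthal exponent produced there, the exponents sum to $j$. Because $\Delta_{\bf d}(j)$ is odd by \cite{Allouche&Peyriere&Wen&Wen:1998}, all accumulated powers of $2$ must cancel, which proves the product-of-Jacobsthals claim.

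For the monotonicity of the $n_i$, I would partition the sequence of steps into \emph{blocks}, a block being a maximal run of consecutive recursion-one steps optionally closed by one recursion-two step. Within a single block beginning at level $k+1$ with $a$ recursion-one steps, the indices $1,2,\ldots,a$ are visited with exponents $2^k,2^{k-1},\ldots,2^{k-a+1}$; if the block closes with recursion two, the extra index $a+1$ receives exponent at most $2^{k-a}$. Hence every single block contributes amounts to the $N_m$ that strictly decrease in $m$. Summing over blocks yields $N_m\ge N_{m+1}$, with strict inequality whenever some block has size strictly greater than $m$. The exceptional equality $n_i=n_{i+1}=1$ at the top two indices arises exactly when $j$ is even, no recursion step has visited indices $\ge i$, and the terminal base $\Delta_{i,1}(2)=J_iJ_{i+1}$ supplies $+1$ to each of $N_i$ and $N_{i+1}$.

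The sign is obtained by multiplying all sign factors along the chain. Recursion two at step $t$ contributes $(-1)^{j_t}$, which reduces to $(-1)^j$ at every invocation since both recursions preserve the parity of $j$ (each subtracts or negates an even number). Recursion one contributes $\epsilon_k$, which is trivial except at the final level $k=1$. Together with the positive terminal bases $\Delta_{1,1}(1)=1$ and $\Delta_{i,1}(2)>0$, these combine into a sign depending only on $j\bmod 4$, as asserted.

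The main obstacle is the cross-block step of the monotonicity argument. Within-block monotonicity is transparent from the halving $2^k,2^{k-1},\ldots$, but when blocks of size exactly $m$ coexist with blocks of size $>m$, one must verify that the ``excess'' contributions at index $m$ from the shorter blocks are absorbed by the strict slack $2^{k_r-m}\ge 1$ generated by each longer block; here the sharp bound $n'\le 2^{k-a}$ on the trailing recursion-two exponent is exactly what is needed. Careful bookkeeping of the terminal base-case contributions then pinpoints precisely when the exceptional $n_i=n_{i+1}=1$ can occur.
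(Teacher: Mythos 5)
Your proposal follows essentially the same route as the paper: unwind $\Delta_{\bf d}(j)=\Delta_{1,\ell}(j)$ via Recursions One and Two down to the base cases $\Delta_{1,1}(1)$ and $\Delta_{i,1}(2)=J_iJ_{i+1}$, kill the accumulated powers of $2$ by citing the oddness result of Allouche--Peyri\`ere--Wen--Wen, and read off exponents and signs from the factors produced along the way. Your block decomposition is a more explicit rendering of the paper's rather terse monotonicity argument (the paper simply notes that recursion one produces $J_i^{2^k}$ with $i$ incremented and exponent halving at each level, that the recursion-two exponent $2j-2^{k+1}$ is at most $2^k$, and asserts the conclusion), so making the bookkeeping precise is a genuine improvement. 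One simplification: the ``cross-block absorption'' you worry about in your last paragraph is a non-issue. Since every block starts at index $1$, its contribution $c_B(m)$ to the exponent of $J_m$ is $2^k,2^{k-1},\dots,2^{k-a+1}$, then at most $2^{k-a}$, then $0$; this is non-increasing in $m$ for every block, so the termwise sum over blocks is automatically non-increasing, with strict decrease at every index reached by at least one block. No interaction between blocks needs to be controlled.

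One substantive caution about the sign. If you actually complete the bookkeeping you sketch, you will find that $\Delta_{\bf d}(j)$ is negative if and only if $j\equiv 0$ or $3 \pmod 4$, not $2$ or $3$ as the statement claims: for odd $j$ the sign is $(-1)^r$ with $r$ the number of recursion-two steps, each contributing $(-1)^{j_t}=-1$, and $r$ is odd exactly when $j\equiv 3\pmod 4$; for even $j$ every $(-1)^{j_t}=+1$ and the only negative factor is $\epsilon_1=-1$, incurred exactly when the chain reaches the value $4$ at level $2$, i.e., exactly when $j\equiv 0\pmod 4$. This is consistent with the paper's own table ($\Delta_{\bf d}(2)=1$, $\Delta_{\bf d}(4)=-3$) and with Theorem 1, which makes $\Delta_{\bf d}(2^k)$ negative; the residues in the theorem statement are a misprint. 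Your closing ``as asserted'' glosses over this: you should carry the sign computation to the end rather than asserting that the factors ``combine into a sign depending only on $j\bmod 4$,'' both because that is where the work is and because doing so exposes the error in the statement.
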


\begin{proof}
We start with the sign first. It is true for $j\leq 4$ by direct
inspection. So we may assume that $k>1$ in the recursion and argue by
induction.  Recursion two reduces $j$ to $j-2^k$, which is equal modulo
4, without changing the sign. Recursion reduces $j$ to $2^{k+1}-j$, so
modulo 4 it interchanges $1$ and $3$. It also changes the sign in this
case, as it should, which finishes the induction.

The recursion produces powers of Jacobsthal numbers and perhaps powers
of two. But we need not compute the exponent of $2$ in $\Delta_{\bf
d}(j)$, since there are none \cite{Allouche&Peyriere&Wen&Wen:1998}.
Recursion one produces $J_i^{2^k}$ and increases the index $i$ by $1$.
Recursion two produces $J_i^{2j-2^{k+1}}$ and resets the index $i$ to
$1$. The exponent $2j-2^{k+1}$ is at most equal to $2^k$, so recursion
one produces the highest power of the two. This exponent decreases
(strictly) with $k$ and it immediately follows that the $n_i$ decrease
with $i$.  The only exception is that
in the final step of the iteration, when we end with
$\Delta_{i,1}(2)=J_iJ_{i+1}$, we obtain two additional Jacobsthal
factors.

\end{proof}

The following recursive formula was conjectured by Jason Bell and Kevin
Hare on November 26 2015, and independently by Tewodros Amdeberhan and
Victor Moll on December 6 2015:

\begin{theorem}
For odd $j$ we have
$$
\Delta_{\bf d}(2^m\cdot j)=\Delta_{\bf d}(j)^{2^m}\cdot \Delta_{\bf d}(2^m).
$$
\end{theorem}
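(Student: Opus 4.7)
The plan is to diagonalize $M_{\bf d}(2^m j)$ block-wise over the structure of $M_{\bf d}(j)$ and evaluate each layer determinant. The key structural identity is
\[
M_{\bf d}(2^m j) = \varphi^m(M_{\bf d}(j)),
\]
i.e., viewed as a $j\times j$ array of $2^m\times 2^m$ blocks, the block at position $(R,C)$ is exactly $\varphi^m(d_{R+C})$. For $m=1$ this follows from the morphism identities $d_{2i}=1$ and $d_{2i+1}=1-d_i$ by direct $2\times 2$ block inspection, and then one iterates in $m$. Conjugating by $\mathcal{H}=I_j\otimes\gamma^m(1)$, the block-diagonalization theorem earlier in this paper tells us that each conjugated block $\gamma^m(1)\varphi^m(d_{R+C})\gamma^m(1)$ is diagonal; a simultaneous row-and-column unshuffle therefore turns $\mathcal{H}M_{\bf d}(2^m j)\mathcal{H}$ into a block-diagonal matrix with $2^m$ layer blocks of size $j$. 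Reading off the diagonals of $P_m$ and $2Q_m$ from that theorem, the $r'$-th layer is
\[
B_{r'} = 2^m\,\epsilon(r')\bigl[J_{a(r')}M_{\bf d}(j) + 2J_{a(r')-1}M_{\overline{\bf d}}(j)\bigr],
\]
where $\epsilon(r')\in\{\pm 1\}$ and $a(r')\in\{1,\ldots,m+1\}$ are determined by the $r'$-th entry of $P_m$. Because the unshuffle is a common permutation of rows and columns, its sign squares to $1$, so $(\det\mathcal{H})^2\Delta_{\bf d}(2^m j)=\prod_{r'}\det B_{r'}$ with $(\det\mathcal{H})^2=2^{mj\cdot 2^m}$.

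To evaluate $\det B_{r'}$, I would substitute $M_{\overline{\bf d}}(j)=E_j-M_{\bf d}(j)$ and use the Jacobsthal identity $J_a-2J_{a-1}=(-1)^{a-1}$ to rewrite the bracket as $(-1)^{a-1}M_{\bf d}(j)+2J_{a-1}E_j$ (with $a=a(r')$). Because $E_j=\mathbf{1}\mathbf{1}^T$ has rank $1$, the matrix determinant lemma gives, for odd $j$,
\[
\det B_{r'} = 2^{mj}\,\epsilon(r')\,(-1)^{a-1}\,\Delta_{\bf d}(j)\,\bigl(1+2(-1)^{a-1}J_{a-1}\,S_j\bigr),\qquad S_j := \mathbf{1}^T M_{\bf d}(j)^{-1}\mathbf{1}.
\]
A second application of $J_a=2J_{a-1}+(-1)^{a-1}$ shows that $1+2(-1)^{a-1}J_{a-1}=(-1)^{a-1}J_a$ provided $S_j=1$, so the two factors of $(-1)^{a-1}$ cancel and $\det B_{r'}=2^{mj}\epsilon(r')\Delta_{\bf d}(j)J_{a(r')}$. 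The product $\prod_{r'}\epsilon(r')J_{a(r')}$ is the product of the entries of the vector $P_m$, which by taking determinants in the block-diagonalization theorem equals $\Delta_{\bf d}(2^m)$. Thus, still under the assumption $S_j=1$, the factors $2^{mj\cdot 2^m}$ cancel and we recover $\Delta_{\bf d}(2^m j)=\Delta_{\bf d}(j)^{2^m}\Delta_{\bf d}(2^m)$ with no residual sign.

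The main obstacle is the key lemma $S_j=1$ for every odd $j$. I would prove it using Theorem~\ref{main2}: for odd $j$, $\Delta_{\overline{\bf d}}(j)=0$ while $\Delta_{\bf d}(j)\neq 0$, so $M_{\overline{\bf d}}(j)=E_j-M_{\bf d}(j)$ admits a nonzero null vector $x$, whereas $M_{\bf d}(j)$ is invertible. Rearranging $(E_j-M_{\bf d}(j))x=0$ gives $M_{\bf d}(j)x=(\mathbf{1}^T x)\mathbf{1}$; if $\mathbf{1}^T x$ were zero then invertibility of $M_{\bf d}(j)$ would force $x=0$, a contradiction. Hence $\mathbf{1}^T x\neq 0$ and $x=(\mathbf{1}^T x)M_{\bf d}(j)^{-1}\mathbf{1}$; applying $\mathbf{1}^T$ on the left then forces $S_j=1$, and the theorem follows.
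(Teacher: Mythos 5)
Your proposal is correct, and it is a genuinely different proof from the one in the paper. The paper derives the identity from its two determinant recursions: it runs them in parallel on $\Delta_{\bf d}(2^m j)$ and $\Delta_{\bf d}(j)$, observes that each step applied to $2^m j$ produces the $2^m$-th power of the Jacobsthal factor produced by the corresponding step for $j$, and notes that the reduction for $2^m j$ terminates at $\Delta_{1,m}(2^m)$, which supplies the factor $\Delta_{\bf d}(2^m)$; signs are checked case by case and powers of $2$ are discarded by appealing to the oddness of $\Delta_{\bf d}(n)$. You instead exploit the 2-D morphism structure directly: the identity $M_{\bf d}(2^m j)=\varphi^m(M_{\bf d}(j))$ (which does follow from $d_{2i}=1$ and $d_{2i+1}=1-d_i$, then induction on $m$), conjugation by $I_j\otimes\gamma^m(1)$, and an unshuffle reduce everything to the $2^m$ layers $2^m\epsilon(r')\bigl[J_{a}M_{\bf d}(j)+2J_{a-1}M_{\overline{\bf d}}(j)\bigr]$, whose determinants you evaluate exactly via the rank-one update $M_{\overline{\bf d}}(j)=E_j-M_{\bf d}(j)$ and the key lemma $\mathbf{1}^T M_{\bf d}(j)^{-1}\mathbf{1}=1$ for odd $j$. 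Your derivation of that lemma is sound, and both of its inputs are available in the paper ($\Delta_{\overline{\bf d}}(j)=0$ for odd $j$, and $\Delta_{\bf d}(j)$ odd hence nonzero by the cited result of Allouche et al.); it can also be obtained in one line by applying the matrix determinant lemma to $0=\Delta_{\overline{\bf d}}(j)=\det\bigl(-M_{\bf d}(j)+E_j\bigr)=(-1)^j\Delta_{\bf d}(j)\bigl(1-\mathbf{1}^T M_{\bf d}(j)^{-1}\mathbf{1}\bigr)$. What your route buys is exact bookkeeping of signs and powers of $2$ (they cancel identically rather than being dismissed by parity), independence from the recursion machinery of Section 4, and, as a by-product, an explicit block decomposition of $M_{\bf d}(2^mj)$ up to similarity; what the paper's route buys is uniformity with its treatment of general orders. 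If you write this up, the two places to add a line of justification are the induction establishing $M_{\bf d}(2^mj)=\varphi^m(M_{\bf d}(j))$ and the observation that the entries of $Q_m$ carry the same signs $\epsilon(r')$ as the corresponding entries of $P_m$, which is what legitimizes your formula for the layer $B_{r'}$.
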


\begin{proof}
By induction. If recursion one applies to $2^m\cdot j$, then it gives
\[\Delta_{i,k+m+1}(2^m\cdot j)=\frac{J_i^{2^{k+m}}}{2^{2^{k+m+1}-j}}\cdot\Delta_{i+1,k+m}(2^m(j-2^{k})) ,\]
which in particular produces a Jacobsthal power $J_i^{2^{k+m}}$ and reduces $2^m\cdot j$ to
$2^m(j-2^k)$.
If recursion one applies to $2^m\cdot j$, then it also does to $j$,
and it produces the Jacobsthal
power $J_i^{2^k+m}$ and reduces $j$ to $(j-2^k)$.
Similarly, if recursion two applies, 
then it produces a Jacobsthal power $J_i^{2^m(2j-2^{k+1})}$ and
reduces $2^m\cdot j$ to $2^m(2^{k+1}-j)$,
while it produces a Jacobsthal power $J_i^{(2j-2^{k+1})}$
for $j$. We can ignore the powers of two, as before, and it is not hard to check that the signs are
equal on both sides of the equation, so we may ignore that as well.
The recursion for odd $j$
ends at $\Delta_{i,1}(1)$,
while for $2^m\cdot j$ it reaches $\Delta_{1,m}(2^m)$ at that
point, and we conclude that the recursive formula holds.
\end{proof}

\section{Hankel determinants of the shifted sequence}

The Hankel matrix of the shifted sequence $s_qs_{q+1}s_{q+2}\cdots$
is given by
\begin{equation}
M_{{\bf s},q} (j) = \left[
	\begin{array}{cccc}
	s_q & s_{q+1} & \cdots & s_{q+j-1} \\
	s_{q+1} & s_{q+2} & \cdots & s_{q+j} \\
	\vdots & \vdots & \ddots & \vdots \\
	s_{q+j-1} & s_{q+j} & \cdots & s_{q+2j-2}
	\end{array} \right] .
\end{equation}
and the corresponding Hankel determinant is $\Delta_{{\bf s},q}(j)$. 
Observe that $M_{{\bf s},q} (j)$ occurs as a $j\times j$ submatrix
in $M_{\bf s} (q+j)$.
For $q>0$ the Hankel determinants of $\bf d$ are no longer products of Jacobsthal numbers, but
the first few terms indicate some interesting patterns: 
\begin{table}[H]
\small{
\begin{center}
\begin{tabular}{c|rrrrrrrrrrrrrrrr}
 \backslashbox{$q$}{$j$}&1&2&3&4&5&6&7&8&9&10&11&12&13&14&15&16\\
\hline
0&1& 1&$-1$&$-3$& 1& 1&$-1$&$-15$& 1& 1&$-1$&$-3$&  1&  1& $-9$ & $-495$\\
1&0&$-1$& 1& 2& 1& 1&$-4$&11& 3&$-2$& 3&$-3$& $-2$& $-7$&141&354\\
2&1& 0&$-1$&$-1$& 0& 1&$-1$&$-8$& 1& 3& 0&$-3$& 1& 40& $-9$ & $-253$\\
3&1&$-1$&$-1$& 0& 0& 1&$-3$& 5&$-3$& 0& 0&$-3$&  $-11$& $-7$&$-17$&180\\
4&1& 1& 0& 0& 0& 1&$-1$&$-3$& 0& 0& 0&$-3$&1&  1& $-4$ & $-128$\\
5&0&$-1$& 0& 0& 1&$-2$& 3& 0& 0& 0& 0&$-3$&  $-2$& $-5$& 52& 76\\
6&1& 1& 0&$-1$& 1& 3& 0& 0& 0& 0& 0&$-3$&1& 21& $-4$&$-45$\\
7&0&$-1$&$-1$& 2&$-3$& 0& 0& 0& 0& 0& 0&$-3$&$-8$& $-5$& $-7$& 26\\
8&1& 1&$-1$&$-3$& 0& 0& 0& 0& 0& 0& 0&$-3$&1&  1& $-1$&$-15$\\
9&0&$-1$& 1& 3& 0& 0& 0& 0& 0& 0& 3& 5 & 3&  4&$-15$&  0\\
10&1& 0&$-1$&$-3$& 0& 0& 0& 0& 0& 3&$-1$&$-8$& 1& 15&  0&  0\\
11&1&$-1$&$-2$& 3& 0& 0& 0& 0&$-3$&$-2$&$-3$&11&15&  0&  0&  0\\
12&1& 1&$-1$&$-3$& 0& 0& 0&$-3$& 1& 1&$-1$&$-15$&  0&  0&  0&  0\\
13&0&$-1$& 1& 3& 0& 0& 3& 5& 3& 4&$-15$& 0&  0&  0&  0&  0\\
14&1& 0&$-1$&$-3$& 0& 3&$-1$&$-8$& 1&15& 0& 0&  0&  0&  0&  0\\
15&1&$-1$&$-2$& 3&$-3$&$-2$&$-3$&11&15& 0& 0& 0&  0&  0&  0&  0\\
16&1& 1&$-1$&$-3$& 1& 1&$-1$&$-15$& 0& 0& 0& 0& 0&  0&  0&  0
\end{tabular}
\end{center}}
\end{table}

One pattern that emerges from this table is that the $q=2^k$'th row starts
with the first $2^{k-1}$ numbers of the first row, followed by
$2^{k-1}$ zeroes. This follows directly from our results.

\begin{proposition}
If $j\leq 2^{k-1}$ then $\Delta_{{\bf d},{2^{k}}}(j)=\Delta_{\bf d}(j)$, and if $2^{k-1}<j\leq 2^{k}$ then $\Delta_{{\bf d},{2^{k}}}(j)=0$.
\end{proposition}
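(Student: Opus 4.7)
The plan is to realize $M_{{\bf d}, 2^k}(j)$ as a specific submatrix of $M_{\bf d}(2^{k+1}) = \varphi^{k+1}(1)$, and then exploit the fact that $\varphi(0)$ is the all-ones $2 \times 2$ matrix, so that one more unfolding of the 2-D morphism immediately exposes duplicated block rows.

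First I would identify the embedding. The $(a,b)$ entry of $M_{{\bf d}, 2^k}(j)$ is $d_{2^k + a + b}$, which is precisely the entry of $M_{\bf d}(2^{k+1})$ at row $a$ and column $2^k + b$. Since $j \leq 2^k$, both indices stay in range, and because
\[
\varphi^{k+1}(1) = \begin{pmatrix} \varphi^k(1) & \varphi^k(0) \\ \varphi^k(0) & \varphi^k(1) \end{pmatrix},
\]
the rows $a \in [0, j)$ and columns $2^k + b \in [2^k, 2^k + j)$ pick out the $j \times j$ top-left corner of the upper-right block $\varphi^k(0)$. So $M_{{\bf d}, 2^k}(j)$ is this top-left $j \times j$ corner of $\varphi^k(0)$.

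Next I would unfold one more level of the morphism. Since $\varphi(0)$ has all four entries equal to $1$, iterating gives
\[
\varphi^k(0) = \begin{pmatrix} \varphi^{k-1}(1) & \varphi^{k-1}(1) \\ \varphi^{k-1}(1) & \varphi^{k-1}(1) \end{pmatrix},
\]
a $2^k \times 2^k$ matrix whose four $2^{k-1} \times 2^{k-1}$ quadrants all equal $M_{\bf d}(2^{k-1})$. The two cases now fall out at once. If $j \leq 2^{k-1}$, the top-left $j \times j$ corner of $\varphi^k(0)$ lies inside a single quadrant, so it is the top-left $j \times j$ block of $M_{\bf d}(2^{k-1})$; by the usual nesting of Hankel matrices this is exactly $M_{\bf d}(j)$, and $\Delta_{{\bf d}, 2^k}(j) = \Delta_{\bf d}(j)$. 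If $2^{k-1} < j \leq 2^k$, then rows $0$ and $2^{k-1}$ of $\varphi^k(0)$ coincide (both equal to the first row of $\varphi^{k-1}(1)$ concatenated with itself), and both sit inside the $j \times j$ window, forcing the determinant to be $0$.

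I do not anticipate a real obstacle: the only things to get right are the indexing in the embedding and the book-keeping of one extra application of $\varphi$. For very small $k$ (say $k = 1$, where $\varphi^0(1)$ must be read as the $1 \times 1$ matrix $[1]$) a direct verification handles the base case.
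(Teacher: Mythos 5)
Your proof is correct and takes essentially the same route as the paper: the paper likewise identifies $M_{{\bf d},2^k}(j)$ with a corner of the off-diagonal block $\varphi^k(0)$ of $M_{\bf d}(2^{k+1})$, unfolds it into four copies of $\varphi^{k-1}(1)$, and concludes by Hankel nesting in the first case and a repeated row (row $1$ versus row $2^{k-1}+1$) in the second. The only cosmetic difference is that you work with the upper-right block where the paper uses the lower-left one, which is immaterial by symmetry.
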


\begin{proof}
The Hankel matrix $M_{{\bf d},2^k}(2^k)$ is the lower left $Q_k$ block of $M_{\bf d}(2^{k+1})=\left[\begin{matrix}P_k&Q_k\\Q_k&P_k\end{matrix}\right]$,
which, in turn, is equal to $Q_k=\left[\begin{matrix}P_{k-1}&P_{k-1}\\P_{k-1}&P_{k-1}\end{matrix}\right]$.
It immediately follows that $\Delta_{{\bf d},{2^{k}}}(j)=\Delta_{\bf d}(j)$ if $j\leq 2^{k-1}$, since this is the determinant
of the $j\times j$ block in $P_{k-1}$, and that $\Delta_{{\bf d},{2^{k}}}(j)=0$ for $2^{k-1}<j<2^k$ since row one
of the matrix is repeated in row $2^{k-1}+1$. 
\end{proof}

The table again indicates that something interesting is going on when
$j$ is a power of $2$. A full analysis is probably not that easy.
Allouche et al.~\cite{Allouche&Peyriere&Wen&Wen:1998} needed 16
recursions to resolve the Hankel table of the Thue-Morse sequence
modulo~2.

\section{Conclusion}

We set out to study the values of Hankel determinants of the Thue-Morse
sequence at powers of 2, and we ended up studying Hankel determinants
of the period-doubling sequence. The values of the Hankel determinants
$\Delta_{\bf t} (n)$ for the Thue-Morse sequence continue to be
mysterious.

\section{Acknowledgment}

We thank the referee for several helpful corrections.

\newcommand{\noopsort}[1]{} \newcommand{\singleletter}[1]{#1}

\end{document}